\newcommand{\myroman}[1]{\textbf{\textup{(\roman{#1})}}} 
\newtheorem{lem}{Lemma}[section]
\newtheorem{defi}[lem]{Definition}
\newtheorem{theo}[lem]{Theorem}
\newtheorem{cor}[lem]{Corollary}
\newtheorem{rem}[lem]{Remark}
\newcommand{\IfTrueThen}[2]{\ifthenelse{\equal{#1}{}}{}{#2}}
\newcommand{\ol}{\overline}
\def\equi{\quad\Leftrightarrow\quad}
\def\cpt{\overset{\mathsf{cpt}}{\hookrightarrow}}
\def\reals{\mathbb{R}}
\def\tdom{I}
\def\ga{\Gamma}
\def\gat{\ga_{\!0}}
\def\gan{\ga_{\!1}}
\def\om{\Omega}
\def\mcH{\mathcal{H}}
\def\sfL{\mathsf{L}}
\def\sfH{\mathsf{H}}
\def\sfC{\mathsf{C}}
\newcommand{\Harm}[2]{\mcH\IfTrueThen{#1}{^{#1}}\IfTrueThen{#2}{_{#2}}}
\renewcommand{\L}[2]{\sfL\IfTrueThen{#1}{^{#1}}\IfTrueThen{#2}{_{#2}}}
\renewcommand{\H}[2]{\sfH\IfTrueThen{#1}{^{#1}}\IfTrueThen{#2}{_{#2}}}
\newcommand{\hH}[2]{\widehat{\sfH}\IfTrueThen{#1}{^{#1}}\IfTrueThen{#2}{_{#2}}}
\newcommand{\C}[2]{\sfC\IfTrueThen{#1}{^{#1}}\IfTrueThen{#2}{_{#2}}}
\newcommand{\dx}[1][x]{\,\mathrm{d}#1}
\newcommand{\eps}{\varepsilon}
\DeclareMathOperator{\A}{A}
\DeclareMathOperator{\T}{T}
\DeclareMathOperator{\p}{\partial}
\DeclareMathOperator{\grad}{\nabla}
\DeclareMathOperator{\rot}{curl}
\DeclareMathOperator{\dive}{div}
\renewcommand{\div}{\dive}
\DeclareMathOperator{\dom}{dom}
\DeclareMathOperator{\ran}{ran}
\DeclarePairedDelimiter{\norm}{\lVert}{\rVert}
\DeclarePairedDelimiterX{\scp}[2]{\langle}{\rangle}{#1,#2}
\title[Compact Embedding for the div-curl System]
{A Compactness Result for the div-curl System with\\ 
Inhomogeneous Mixed Boundary Conditions\\
for Bounded Lipschitz Domains\\
and Some Applications}
\author{Dirk Pauly}
\address{Fakult\"at f\"ur Mathematik,
Universit\"at Duisburg-Essen, Germany}
\email[Dirk Pauly]{dirk.pauly@uni-due.de}
\author{Nathanael Skrepek}
\address{Fakult\"at f\"ur Mathematik und Naturwissenschaften,
Bergische Universit\"at Wuppertal, Germany}
\email[Nathanael Skrepek]{skrepek@uni-wuppertal.de}
\keywords{compact embeddings, div-curl system,
mixed boundary conditions,
inhomogeneous boundary conditions}
\subjclass{}
\date{\today}
\thanks{We gratefully acknowledge ISem23
(23rd Internet Seminar 2019/2020, Evolutionary Equations,
\url{http://www.mat.tuhh.de/isem23}) 
for providing the platform to start this research.
The second author has received funding from the European Union's Horizon 2020 
research and innovation programme under the Marie Sklodowska-Curie grant agreement No 765579}
\begin{document}

\def\titlerepude{\sf A Compactness Result for the div-curl System with\\ 
Inhomogeneous Mixed Boundary Conditions\\
for Bounded Lipschitz Domains\\
and Some Applications}
\def\authorrepude{Dirk Pauly \& Nathanael Skrepek}
\def\daterepdue{March 11, 2021}
\def\reportudemathyesno{no}
\def\reportudemathnumber{SM-UDE-825}
\def\reportudemathyear{2021}
\def\reportudematheingang{\daterepdue}
\newcommand{\preprintudemath}[5]{
\thispagestyle{empty}
\begin{center}\normalsize SCHRIFTENREIHE DER FAKULT\"AT F\"UR MATHEMATIK\end{center}
\vspace*{5mm}
\begin{center}#1\end{center}
\vspace*{5mm}
\begin{center}by\end{center}
\vspace*{0mm}
\begin{center}#2\end{center}
\vspace*{5mm}
\normalsize 
\begin{center}#3\hspace{69mm}#4\end{center}
\newpage
\thispagestyle{empty}
\vspace*{210mm}
Received: #5
\newpage
\addtocounter{page}{-2}
\normalsize
}
\ifthenelse{\equal{\reportudemathyesno}{yes}}
{\preprintudemath{\titlerepude}{\authorrepude}{\reportudemathnumber}{\reportudemathyear}{\reportudematheingang}}
{}


\begin{abstract}
For a bounded Lipschitz domain with Lipschitz interface we show 
the following \emph{compactness theorem}:
Any $\L{2}{}$-bounded sequence of vector fields
with $\L{2}{}$-bounded rotations and $\L{2}{}$-bounded divergences
as well as $\L{2}{}$-bounded tangential traces on one part of the boundary 
and $\L{2}{}$-bounded normal traces on the other part of the boundary,
contains a strongly $\L{2}{}$-convergent subsequence.
This generalises recent results for homogeneous mixed boundary conditions
in~\cite{bauerpaulyschomburg2016a,bauerpaulyschomburg2019a}.
As applications we present a related \emph{Friedrichs/Poincar\'e type estimate},
a \emph{div-curl lemma}, and show that the Maxwell operator with mixed 
tangential and impedance boundary conditions (Robin type boundary conditions)
has \emph{compact resolvents}.
\end{abstract}


\maketitle
\tableofcontents


\section{Introduction}


Let $\om\subset\reals^{3}$ be open with boundary $\ga$,
composed of the boundary parts $\gat$ (tangential) and $\gan$ (normal).
In~\cite[Theorem 4.7]{bauerpaulyschomburg2016a}
the following version of Weck's selection theorem has been shown:

\begin{theo}[compact embedding for vector fields with homogeneous mixed boundary conditions]%
\label{cptvechomotheo}
Let $(\om,\gat)$ be a bounded strong Lipschitz pair and let $\eps$ be admissible. Then
\[
  \H{}{\gat}(\rot,\om)\cap\eps^{-1}\H{}{\gan}(\div,\om)\cpt\L{2}{}(\om).
\]
\end{theo}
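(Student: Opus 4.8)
The plan is to reduce this vectorial compactness assertion to the scalar Rellich selection theorem by means of a \emph{regular decomposition} adapted to the tangential boundary condition. The central tool I would establish first is a bounded linear decomposition
\[
  \H{}{\gat}(\rot,\om)\ni E = E_{\mathrm{reg}} + \grad u,
  \qquad E_{\mathrm{reg}}\in\H{1}{}(\om),\quad u\in\H{1}{\gat}(\om),
\]
where the regular part carries the rotation, $\rot E_{\mathrm{reg}}=\rot E$, the potential $u$ vanishes on $\gat$ in the trace sense, and both pieces depend continuously on $E$ in the graph norm of $\rot$. For bounded strong Lipschitz pairs $(\om,\gat)$ such a decomposition is the substitute for the ($\H{1}{}$-)regularity that fails on genuinely Lipschitz domains.

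Now let $(E_n)$ be bounded in $\H{}{\gat}(\rot,\om)\cap\eps^{-1}\H{}{\gan}(\div,\om)$. Passing to a subsequence, I may assume $E_n\rightharpoonup E$ in $\L{2}{}(\om)$ together with $\rot E_n\rightharpoonup\rot E$ and $\div(\eps E_n)\rightharpoonup\div(\eps E)$; since the homogeneous boundary conditions survive weak limits, $E$ belongs to the same space. Decomposing $E_n=E_{\mathrm{reg},n}+\grad u_n$, continuity bounds $(E_{\mathrm{reg},n})$ in $\H{1}{}(\om)$, so by Rellich a subsequence satisfies $E_{\mathrm{reg},n}\to e$ strongly in $\L{2}{}(\om)$. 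Because $u_n$ vanishes on $\gat$, the Friedrichs/Poincar\'e estimate bounds $(u_n)$ in $\H{1}{}(\om)$ (if $\gat=\emptyset$ one instead normalises $u_n$ to vanishing mean), whence along a further subsequence $u_n\to u$ strongly in $\L{2}{}(\om)$ and $\grad u_n\rightharpoonup\grad u$.

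It remains to promote $\grad u_n\rightharpoonup\grad u$ to strong convergence, for then $E_n=E_{\mathrm{reg},n}+\grad u_n\to e+\grad u=E$ in $\L{2}{}(\om)$. By coercivity of the admissible $\eps$ it suffices to show $\scp{\eps\grad u_n}{\grad u_n}\to\scp{\eps\grad u}{\grad u}$. Writing $\grad u_n=E_n-E_{\mathrm{reg},n}$, the contribution $\scp{\eps E_{\mathrm{reg},n}}{\grad u_n}\to\scp{\eps e}{\grad u}$ since $E_{\mathrm{reg},n}\to e$ strongly and $\grad u_n\rightharpoonup\grad u$. For the decisive term I integrate by parts,
\[
  \scp{\eps E_n}{\grad u_n}_{\L{2}{}(\om)}
  = -\scp{\div(\eps E_n)}{u_n}_{\L{2}{}(\om)},
\]
the boundary integral over $\ga=\gat\cup\gan$ vanishing because $u_n=0$ on $\gat$ while the normal component of $\eps E_n$ is zero on $\gan$. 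As $\div(\eps E_n)\rightharpoonup\div(\eps E)$ and $u_n\to u$ strongly, the right-hand side converges, and the very same integration by parts for the limit field gives $\scp{\eps\grad u}{\grad u}=-\scp{\div(\eps E)}{u}-\scp{\eps e}{\grad u}$, which is exactly the limit just computed. Hence $\scp{\eps(\grad u_n-\grad u)}{\grad u_n-\grad u}\to0$ and $\grad u_n\to\grad u$ strongly.

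The main obstacle is the first step: constructing the regular decomposition while preserving the homogeneous tangential trace on $\gat$ over a domain that is only Lipschitz. I would build it by a partition of unity subordinate to a finite family of bi-Lipschitz boundary charts, transform each localised field to a reference half-cube, extend it across the flattened boundary, and apply a regular potential (Poincar\'e-type) operator for $\rot$ there, finally transporting everything back; the care needed to keep the construction compatible with the $\gat$-condition---and in particular across the interface where $\gat$ meets $\gan$---is the genuinely delicate part of the argument.
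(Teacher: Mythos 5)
Your argument is sound, but note first that the paper does not prove this theorem at all: it is imported verbatim from~\cite[Theorem 4.7]{bauerpaulyschomburg2016a}, so the only internal point of comparison is the paper's proof of the inhomogeneous generalisation, \Cref{cptvecinhomotheo}. Your strategy is structurally the same as that proof --- reduce to Rellich via an $\H{1}{}$-regular decomposition adapted to the mixed boundary conditions, then recover the remaining gradient part by the duality pairing between $\H{1}{\gat}(\om)$ and $\H{}{\gan}(\div,\om)$ --- but your decomposition is different: you use the non-orthogonal two-term regular decomposition $E=E_{\mathrm{reg}}+\grad u$ of $\H{}{\gat}(\rot,\om)$ with $E_{\mathrm{reg}}\in\H{1}{}(\om)$, whereas the paper uses the $\L{2}{\eps}(\om)$-orthogonal regular Helmholtz decomposition of \Cref{reghelmcor}, $E=\grad u_{\grad}+E_{\mcH}+\eps^{-1}\rot E_{\rot}$ with \emph{both} potentials regular and a finite-dimensional harmonic part. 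Your route is slightly leaner: the curl-carrying piece is already $\H{1}{}(\om)$, so Rellich handles it outright and only one integration by parts is needed, with the loss of orthogonality compensated correctly by pairing strong against weak convergence in the cross term $\scp{\eps E_{\mathrm{reg},n}}{\grad u_n}$; the paper's orthogonal route costs a second integration by parts (for the $\rot$-potential) but yields the component norm bounds for free and passes directly to inhomogeneous $\L{2}{}(\ga_{0/1})$-traces via the boundary terms. Both arguments stand or fall on the same non-elementary input, namely the existence of bounded regular decompositions/potentials respecting the partial boundary condition on a strong Lipschitz pair --- exactly what the paper imports as \Cref{regpottheo} and \Cref{helmtheo} --- and you correctly isolate this as the crux; your localisation-and-flattening sketch is the standard construction, with the behaviour near the interface $\ol{\gat}\cap\ol{\gan}$ being the genuinely delicate point, but since this is a citable result you have not left a gap in the logic, only deferred the heavy lifting to the same place the paper does.
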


Here, $\cpt$ denotes a compact embedding, and 
-- in classical terms and in the smooth case -- 
we have for a vector field $E$ ($n$ denotes the exterior unit normal at $\ga$)
\begin{align*}
  E&\in\H{}{\gat}(\rot,\om)
  &&\equi&
           E&\in\L{2}{}(\om),&
              \rot E&\in\L{2}{}(\om),&
              n\times E|_{\gat}&=0,\\
  E&\in\eps^{-1}\H{}{\gan}(\div,\om)
  &&\equi&
           \eps E&\in\L{2}{}(\om),&
                   \div\eps E&\in\L{2}{}(\om),&
                   n\cdot\eps E|_{\gan}&=0.
\end{align*}

Note that \Cref{cptvechomotheo} even holds for bounded weak Lipschitz pairs $(\om,\gat)$.
For exact definitions and notations see \Cref{sec:not},
and for a history of related compact embedding results see, e.g.,
\cite{weck1974a,picard1984a,weber1980a,costabel1990a,witsch1993a,jochmann1997a,picardweckwitsch2001a}
and~\cite{leis1986a}.
The general importance of compact embeddings in a functional analytical setting
(FA-ToolBox) for Hilbert complexes (such as de Rham, elasticity, biharmonic)
is described, e.g., in~\cite{pauly2017a,pauly2019a,pauly2019b,pauly2019c}
and~\cite{paulyzulehner2020a,paulyzulehner2020b,arnoldhu2020a}.

In this paper, we shall generalise \Cref{cptvechomotheo}
to the case of inhomogeneous boundary conditions, i.e., we will show that 
the compact embedding in \Cref{cptvechomotheo} still holds if the space
$$\H{}{\gat}(\rot,\om)\cap\eps^{-1}\H{}{\gan}(\div,\om)$$ 
is replaced by
$$\hH{}{\gat}(\rot,\om)\cap\eps^{-1}\hH{}{\gan}(\div,\om),$$ 
where
in classical terms and in the smooth case
\begin{align*}
  E&\in\hH{}{\gat}(\rot,\om)
  &&\equi&
           E&\in\L{2}{}(\om),&
              \rot E&\in\L{2}{}(\om),&
              n\times E|_{\gat}&\in\L{2}{}(\gat),\\
  E&\in\eps^{-1}\hH{}{\gan}(\div,\om)
  &&\equi&
           \eps E&\in\L{2}{}(\om),&
                   \div\eps E&\in\L{2}{}(\om),&
                   n\cdot\eps E|_{\gan}&\in\L{2}{}(\gan).
\end{align*}

The main result (compact embedding) is formulated in \Cref{cptvecinhomotheo}.
As applications we show in \Cref{estveccor1}
that the compact embedding implies a related Friedrichs/Poincar\'e type estimate,
showing well-posedness of related systems of partial differential equations.
Moreover, in \Cref{divcurllem} we prove that \Cref{cptvecinhomotheo} yields a div-curl lemma.
Note that corresponding results for exterior domains are straight forward
using weighted Sobolev spaces, see~\cite{osterbrinkpauly2019a,osterbrinkpauly2020a}.
Another application is presented in \Cref{sec:maxwell}
where we show that our compact embedding result implies
compact resolvents of the Maxwell operator
with inhomogeneous mixed boundary conditions, even of impedance type.
We finally note in \Cref{sec:wave} that the corresponding result
holds (in the simpler situation) for the impedance wave equation (acoustics) as well.


\section{Notations}%
\label{sec:not}


Throughout this paper, let $\om\subset\reals^{3}$ be an open and bounded
strong Lipschitz domain,
and let $\eps$ be an \emph{admissible} tensor (matrix) field, i.e.,
a symmetric, $\L{\infty}{}$-bounded, and uniformly positive definite 
tensor field $\eps:\om\to\reals^{3\times3}$.
Moreover, let the boundary $\ga$ of $\om$ be decomposed into two relatively open 
and strong Lipschitz subsets $\gat$ and $\gan\coloneqq\ga\setminus\ol{\gat}$
forming the interface $\ol{\gat}\cap\ol{\gan}$ for the mixed boundary conditions.
See~\cite{bauerpaulyschomburg2016a,bauerpaulyschomburg2018a,bauerpaulyschomburg2019a}
for exact definitions. We call $(\om,\gat)$ a bounded strong Lipschitz pair.

The usual Lebesgue and Sobolev Hilbert spaces (of scalar or vector valued fields)
are denoted by $\L{2}{}(\om)$, $\H{1}{}(\om)$, $\H{}{}(\rot,\om)$, $\H{}{}(\div,\om)$,
and by $\H{}{0}(\rot,\om)$ and $\H{}{0}(\div,\om)$ we indicate 
the spaces with vanishing $\rot$ and $\div$, respectively. 
Homogeneous boundary conditions are introduced in the strong sense as closures of respective test fields from
\[
  \C{\infty}{\gat}(\om)\coloneqq
  \big\{\phi|_{\om}\,:\,\phi\in\C{\infty}{}(\reals^{3}),\;
  \mathrm{supp}\,\phi\;\mathrm{compact},\;
  \mathrm{dist}(\mathrm{supp}\,\phi,\gat)>0\big\},
\]
i.e.,
\[
  \H{1}{\gat}(\om)\coloneqq\ol{\C{\infty}{\gat}(\om)}^{\H{1}{}(\om)},
  \quad
  \H{}{\gat}(\rot,\om)\coloneqq\ol{\C{\infty}{\gat}(\om)}^{\H{}{}(\rot,\om)},
  \quad
  \H{}{\gat}(\div,\om)\coloneqq\ol{\C{\infty}{\gat}(\om)}^{\H{}{}(\div,\om)},
\]
and we set $\H{1}{\emptyset}(\om)\coloneqq\H{1}{}(\om)$, $\H{}{\emptyset}(\rot,\om)\coloneqq\H{}{}(\rot,\om)$,
and $\H{}{\emptyset}(\div,\om)\coloneqq\H{}{}(\div,\om)$.
Spaces with vanishing $\rot$ and $\div$
are again denoted by $\H{}{\gat,0}(\rot,\om)$ and $\H{}{\gat,0}(\div,\om)$, respectively. 
Moreover, we introduce the cohomology space of Dirichlet/Neumann fields (generalised harmonic fields)
\[
  \Harm{}{\gat,\gan,\eps}(\om)
  \coloneqq\H{}{\gat,0}(\rot,\om)\cap\eps^{-1}\H{}{\gan,0}(\div,\om).
\]

The $\L{2}{}(\om)$-inner product and norm (of scalar or vector valued $\L{2}{}(\om)$-spaces) will be denoted by 
$\scp{\,\cdot\,}{\,\cdot\,}_{\L{2}{}(\om)}$ and $\norm{\,\cdot\,}_{\L{2}{}(\om)}$, respectively,
and the weighted Lebesgue space $\L{2}{\eps}(\om)$ is defined as $\L{2}{}(\om)$ (of vector fields)
but being equipped with the weighted $\L{2}{}(\om)$-inner product and norm
$\scp{\,\cdot\,}{\,\cdot\,}_{\L{2}{\eps}(\om)}\coloneqq\scp{\eps\,\cdot\,}{\,\cdot\,}_{\L{2}{}(\om)}$
and $\norm{\,\cdot\,}_{\L{2}{\eps}(\om)}$, respectively.
The norms in, e.g., $\H{1}{}(\om)$ and $\H{}{}(\rot,\om)$
are denoted by $\norm{\,\cdot\,}_{\H{1}{}(\om)}$ and $\norm{\,\cdot\,}_{\H{}{}(\rot,\om)}$,
respectively. Orthogonality and orthogonal sum in $\L{2}{}(\om)$ and $\L{2}{\eps}(\om)$
are indicated by $\bot_{\L{2}{}(\om)}$, $\bot_{\L{2}{\eps}(\om)}$, 
and $\oplus_{\L{2}{}(\om)}$, $\oplus_{\L{2}{\eps}(\om)}$, respectively.

Finally, we introduce inhomogeneous tangential and normal $\L{2}{}$-boundary conditions in
\begin{align*}
  \hH{}{\gat}(\rot,\om)
  &\coloneqq\big\{E\in\H{}{}(\rot,\om)\,:\,\tau_{\gat}E\in\L{2}{}(\gat)\big\},\\
  \hH{}{\gan}(\div,\om)
  &\coloneqq\big\{E\in\H{}{}(\div,\om)\,:\,\nu_{\gan}E\in\L{2}{}(\gan)\big\}
\end{align*}
with norms given by, e.g.,
$\norm{E}_{\hH{}{\gat}(\rot,\om)}^2
\coloneqq\norm{E}_{\H{}{}(\rot,\om)}^2
+\norm{\tau_{\gat}E}_{\L{2}{}(\gat)}^2$.
The definitions of the latter Hilbert spaces need some explanations:

\begin{defi}[$\L{2}{}$-traces]
  \label{def:trace}
  \mbox{}
  \begin{enumerate}[label=\myroman{*}]
    \item\label{def:tangential-trace-L2}
      The tangential trace of a vector field $E\in\H{}{}(\rot,\om)$
      is a well-defined tangential vector field $\tau_{\ga}E\in\H{-1/2}{}(\ga)$
      generalising the classical tangential trace $\tau_{\ga}\widetilde{E}=-n\times n\times\widetilde{E}|_{\ga}$
      for smooth vector fields $\widetilde{E}$.
      By the notation $\tau_{\gat}E\in\L{2}{}(\gat)$ we mean, that there exists
      a tangential vector field $E_{\gat}\in\L{2}{}(\gat)$, such that
      for all vector fields $\Phi\in\H{1}{\gan}(\om)$ it holds
      \[
      \scp{\rot\Phi}{E}_{\L{2}{}(\om)}
      -\scp{\Phi}{\rot E}_{\L{2}{}(\om)}
      =\scp{\tau^{\times}_{\gat}\Phi}{E_{\gat}}_{\L{2}{}(\gat)}.
      \]
      Then we set $\tau_{\gat}E\coloneqq E_{\gat}\in\L{2}{}(\gat)$.
      Here and in the following, the twisted tangential trace of the smooth vector field $\Phi$
      is given by the tangential vector field $\tau^{\times}_{\ga}\Phi=n\times\Phi|_{\ga}\in\L{2}{}(\ga)$
      with $\tau^{\times}_{\gan}\Phi=\tau^{\times}_{\ga}\Phi|_{\gan}=0$
      and $\tau^{\times}_{\gat}\Phi=\tau^{\times}_{\ga}\Phi|_{\gat}\in\L{2}{}(\gat)$.
      Note that $\tau_{\gat}E$ is well defined as
      $\tau^{\times}_{\gat}\H{1}{\gan}(\om)$ is dense in
      $\L{2}{t}(\gat)=\big\{v\in\L{2}{}(\gat):n\cdot v=0\big\}$.
    \item\label{def:normal-trace-L2}
      Analogously, the normal trace of a vector field $E\in\H{}{}(\div,\om)$
      is a well-defined function $\nu_{\ga}E\in\H{-1/2}{}(\ga)$
      generalising the classical normal trace $\nu_{\ga}\widetilde{E}=n\cdot\widetilde{E}|_{\ga}$
      for smooth vector fields $\widetilde{E}$.
      Again, by the notation $\nu_{\gan}E\in\L{2}{}(\gan)$ we mean, that
      for all functions $\phi\in\H{1}{\gat}(\om)$ it holds
      \[
      \scp{\grad\phi}{E}_{\L{2}{}(\om)}
      +\scp{\phi}{\div E}_{\L{2}{}(\om)}
      =\scp{\sigma_{\gan}\phi}{\nu_{\gan}E}_{\L{2}{}(\gan)}.
      \]
      Here, the well-known scalar trace of the smooth function $\phi$
      is given by $\sigma_{\ga}\phi=\phi|_{\ga}\in\L{2}{}(\ga)$
      with $\sigma_{\gat}\phi=\sigma_{\ga}\phi|_{\gat}=0$
      and $\sigma_{\gan}\phi=\sigma_{\ga}\phi|_{\gan}\in\L{2}{}(\gan)$.
      Note that $\nu_{\gan}E$ is well defined as
      $\sigma_{\gan}\H{1}{\gat}(\om)$ is dense in $\L{2}{}(\gan)$.
  \end{enumerate}
\end{defi}

\begin{rem}[$\L{2}{}$-traces]
\label{rem:trace}
Analogously to \Cref{def:trace}~\ref{def:tangential-trace-L2} and as
$$\tau^{\times}_{\gat}\widetilde{E}\cdot\tau_{\gat}\widetilde{H}
=(n\times\widetilde{E})\cdot(-n\times n\times\widetilde{H})
=(n\times n\times\widetilde{E})\cdot(n\times\widetilde{H})
=-\tau_{\gat}\widetilde{E}\cdot\tau^{\times}_{\gat}\widetilde{H}$$
holds on $\gat$ for smooth vector fields $\widetilde{E}$, $\widetilde{H}$,
we can define the twisted tangential trace
$\tau^{\times}_{\gat}E\in\L{2}{}(\gat)$ of a vector field $E\in\H{}{}(\rot,\om)$
as well by
\[
  \scp{\rot\Phi}{E}_{\L{2}{}(\om)}
  -\scp{\Phi}{\rot E}_{\L{2}{}(\om)}
  =-\scp{\tau_{\gat}\Phi}{\tau^{\times}_{\gat}E}_{\L{2}{}(\gat)}
\]
for all vector fields $\Phi\in\H{1}{\gan}(\om)$.
\end{rem}


\section{Preliminaries}


In~\cite[Theorem 5.5]{bauerpaulyschomburg2019a}, see~\cite[Theorem 7.4]{bauerpaulyschomburg2018a} for more details and
compare to~\cite{bauerpaulyschomburg2016a},
the following theorem about the existence of regular potentials 
for the rotation with homogeneous mixed boundary conditions has been shown.

\begin{theo}[regular potential for $\rot$ with homogeneous mixed boundary conditions]%
\label{regpottheo}
\begin{align*}
  \H{}{\gan,0}(\div,\om)\cap\Harm{}{\gat,\gan}(\om)^{\bot_{\L{2}{}(\om)}}
  &=\rot\H{}{\gan}(\rot,\om)=\rot\H{1}{\gan}(\om)
\end{align*}
holds together with a regular potential operator mapping 
$\rot\H{}{\gan}(\rot,\om)$ to $\H{1}{\gan}(\om)$ continuously.
In particular, the latter ranges are closed subspaces of $\L{2}{}(\om)$.
\end{theo}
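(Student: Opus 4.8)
The plan is to close the cycle of inclusions
\[
  \rot\H{1}{\gan}(\om)
  \subseteq\rot\H{}{\gan}(\rot,\om)
  \subseteq\H{}{\gan,0}(\div,\om)\cap\Harm{}{\gat,\gan}(\om)^{\bot_{\L{2}{}(\om)}}
  \subseteq\rot\H{1}{\gan}(\om),
\]
the first two inclusions being elementary and the last one carrying all the work (and yielding the bounded potential operator). Closedness of the three coinciding spaces is then immediate, since the middle space is closed as an intersection of a kernel/trace condition with an orthogonal complement.

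\textbf{The easy inclusions.} The first is trivial from $\H{1}{\gan}(\om)\subseteq\H{}{\gan}(\rot,\om)$. For the second, let $F=\rot E$ with $E\in\H{}{\gan}(\rot,\om)$. Then $\div F=\div\rot E=0$, so $F\in\H{}{0}(\div,\om)$. Testing against $\phi\in\H{1}{\gat}(\om)$ and using that $\grad\phi\in\H{}{\gat,0}(\rot,\om)$ together with the integration-by-parts identities of \Cref{def:trace} and \Cref{rem:trace}, all boundary contributions cancel (the tangential trace of $E$ vanishes on $\gan$, that of $\grad\phi$ on $\gat$), whence $\scp{\grad\phi}{F}_{\L{2}{}(\om)}=0$; by the density statement in \Cref{def:trace} this forces $\nu_{\gan}F=0$, i.e.\ $F\in\H{}{\gan,0}(\div,\om)$. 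Finally, for $H\in\Harm{}{\gat,\gan}(\om)$ we have $\rot H=0$ and again both boundary terms vanish, so $\scp{F}{H}_{\L{2}{}(\om)}=\scp{E}{\rot H}_{\L{2}{}(\om)}=0$, i.e.\ $F\bot_{\L{2}{}(\om)}\Harm{}{\gat,\gan}(\om)$.

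\textbf{The potential (hard inclusion).} Given $F$ in the middle space, I would construct $E\in\H{1}{\gan}(\om)$ with $\rot E=F$ in two moves. Since $\nu_{\gan}F=0$, the extension $\tilde F$ of $F$ by zero across $\gan$ lies in $\H{}{0}(\div,\tilde\om)$ on a bounded, topologically trivial strong Lipschitz domain $\tilde\om\supset\om$ having $\gan$ in its interior. On $\tilde\om$ the classical full-space construction applies: extending once more by zero to $\reals^{3}$ and using the vector Newton potential $N$, the field $E_{0}\coloneqq(\rot N\tilde F)|_{\om}\in\H{1}{}(\om)$ satisfies $\rot E_{0}=F$ and $\norm{E_{0}}_{\H{1}{}(\om)}\le C\norm{F}_{\L{2}{}(\om)}$, but \emph{without} the boundary condition on $\gan$. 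The second move removes the boundary defect $d\coloneqq E_{0}|_{\gan}$ by subtracting a \emph{curl-free} regular field $w\in\H{1}{}(\om)$ with $w|_{\gan}=d$, so that $E\coloneqq E_{0}-w\in\H{1}{\gan}(\om)$ and $\rot E=\rot E_{0}=F$. Here the two hypotheses on $F$ enter precisely: the classical identity $\nu_{\gan}\rot E_{0}=\div_{\ga}\tau^{\times}_{\gan}E_{0}=0$ shows that the tangential part of $d$ is surface-closed on $\gan$, the pointwise compatibility needed for $d$ to be the trace of a curl-free field, while $F\bot_{\L{2}{}(\om)}\Harm{}{\gat,\gan}(\om)$ kills the global cohomological obstruction, making $w$ exist and depend boundedly on $F$. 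Composing the two moves gives the asserted bounded potential operator.

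\textbf{Main obstacle.} The delicate point is the second move on a merely Lipschitz, mixed boundary: building the curl-free corrector $w$ with prescribed full Dirichlet trace $d$ on $\gan$, bounded in $\H{1}{}(\om)$ and not disturbing $\gat$. I would localise by a partition of unity subordinate to a boundary atlas, flatten $\gan$ in each chart so that a local reflection across $\{x_{3}=0\}$ produces a local curl-free field with the correct trace, and then patch. The patching creates lower-order commutator errors supported away from $\gan$, which I would absorb by iterating the construction in a Neumann-series / closed-range argument whose convergence again rests on the orthogonality to $\Harm{}{\gat,\gan}(\om)$. This globalisation, rather than any single local estimate, is the technical heart of the proof.
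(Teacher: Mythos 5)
You should first be aware that the paper contains no proof of this statement at all: it is quoted verbatim from \cite[Theorem 5.5]{bauerpaulyschomburg2019a} (with details in \cite[Theorem 7.4]{bauerpaulyschomburg2018a}), so there is no in-paper argument to compare against and your attempt has to stand entirely on its own. The cycle of inclusions and the two easy inclusions are fine in outline; for the second one it is cleaner (and avoids the weak-versus-strong boundary condition issue you silently step over) to note that $\rot\C{\infty}{\gan}(\om)\subseteq\C{\infty}{\gan}(\om)\cap\H{}{0}(\div,\om)$ and pass to closures in the $\H{}{}(\div,\om)$-norm, which puts $\rot E$ directly into the strongly defined space $\H{}{\gan,0}(\div,\om)$.

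The hard inclusion, however, contains a concrete error and a large unproved component. The error is in the Newton potential step: after extending $F$ by zero across $\gan$ into $\widetilde{\om}$ you extend \emph{again} by zero to $\reals^{3}$, but this second extension is not divergence-free, since nothing forces the normal trace of $\widetilde{F}$ to vanish on the rest of $\p\widetilde{\om}$ (in particular near $\gat$, where $F$ carries no boundary condition). Hence $\div N\widetilde{F}\neq 0$ and
\[
  \rot\rot N\widetilde{F}=\grad\div N\widetilde{F}+\widetilde{F},
\]
so $\rot E_{0}-F$ is a nonzero harmonic gradient on $\om$ and your first move does not actually produce a potential; one must instead first extend $F$ to a field that is divergence-free on all of $\reals^{3}$ (itself a nontrivial solvability problem on a Lipschitz pair). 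Second, the corrector: a curl-free $\H{1}{}(\om)$-field $w$ with $w|_{\gan}=d$ is (on a topologically trivial piece) of the form $w=\grad\psi$, so you are prescribing the full Cauchy data of an $\H{2}{}$-function on the Lipschitz patch $\gan$; this needs surface-\emph{exactness} of the tangential part of $d$ (not merely surface-closedness, which leaves period conditions when $\gan$ is not simply connected) together with a bounded partial-boundary extension operator. Your appeal to $F\bot_{\L{2}{}(\om)}\Harm{}{\gat,\gan}(\om)$ to remove the global obstruction is the right instinct, but no argument connects the cohomology of $\gan$ to that orthogonality, and your closing paragraph concedes that the partition-of-unity/reflection/Neumann-series globalisation is a plan rather than a proof. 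That globalisation is precisely the technical content of the cited references and cannot be waved through.
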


Moreover, we need~\cite[Theorem 5.2]{bauerpaulyschomburg2019a}:

\begin{theo}[Helmholtz decompositions with homogeneous mixed boundary conditions]%
\label{helmtheo}
The ranges $\grad\H{1}{\gat}(\om)$ and $\rot\H{}{\gan}(\rot,\om)$
are closed subspaces of $\L{2}{}(\om)$,
and the $\L{2}{\eps}(\om)$-orthogonal Helmholtz decompositions
\begin{align*}
  \L{2}{\eps}(\om)
  &=\grad\H{1}{\gat}(\om)
    \oplus_{\L{2}{\eps}(\om)}\eps^{-1}\H{}{\gan,0}(\div,\om)\\
  &=\H{}{\gat,0}(\rot,\om)
    \oplus_{\L{2}{\eps}(\om)}\eps^{-1}\rot\H{}{\gan}(\rot,\om)\\
  &=\grad\H{1}{\gat}(\om)
    \oplus_{\L{2}{\eps}(\om)}\Harm{}{\gat,\gan,\eps}(\om)
    \oplus_{\L{2}{\eps}(\om)}\eps^{-1}\rot\H{}{\gan}(\rot,\om)
\end{align*}
hold (with continuous potential operators).
Moreover, $\Harm{}{\gat,\gan,\eps}(\om)$ has finite dimension.
\end{theo}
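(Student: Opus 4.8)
The plan is to obtain all three decompositions from a single structural fact: that the two ranges $\grad\H{1}{\gat}(\om)$ and $\rot\H{}{\gan}(\rot,\om)$ are closed in $\L{2}{}(\om)$. Once a subspace is closed it splits off $\L{2}{\eps}(\om)$ as an $\L{2}{\eps}(\om)$-orthogonal summand, and the whole theorem reduces to identifying the complements. Closedness of $\rot\H{}{\gan}(\rot,\om)$, together with the continuous regular potential operator, is already supplied by \Cref{regpottheo}; I regard this (nontrivial) regular potential result as the main analytical input and would simply invoke it. For $\grad\H{1}{\gat}(\om)$ I would instead establish closedness from a Friedrichs/Poincar\'e estimate $\norm{u}_{\L{2}{}(\om)}\le c\,\norm{\grad u}_{\L{2}{}(\om)}$ on the orthogonal complement of the (finite-dimensional) kernel of locally constant functions in $\H{1}{\gat}(\om)$; this estimate follows in the standard way by a contradiction argument from the compact embedding $\H{1}{\gat}(\om)\cpt\L{2}{}(\om)$, and it makes $\grad$ bounded below there, yielding both closed range and a continuous potential operator.

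Given closedness of $\grad\H{1}{\gat}(\om)$, I would write $\L{2}{\eps}(\om)=\grad\H{1}{\gat}(\om)\oplus_{\L{2}{\eps}(\om)}\grad\H{1}{\gat}(\om)^{\bot_{\L{2}{\eps}(\om)}}$ and identify the complement: a field $E$ satisfies $\scp{\grad u}{E}_{\L{2}{\eps}(\om)}=\scp{\grad u}{\eps E}_{\L{2}{}(\om)}=0$ for all $u\in\H{1}{\gat}(\om)$ exactly when $\eps E\in\H{}{\gan,0}(\div,\om)$, by the weak divergence/normal-trace characterisation of \Cref{def:trace}~\ref{def:normal-trace-L2}: testing with $\phi\in\H{1}{\gat}(\om)$ leaves only the $\gan$-boundary term, and varying $\phi$ in the interior and on $\gan$ forces both $\div\eps E=0$ and $\nu_{\gan}\eps E=0$. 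This yields the first decomposition. The second is dual: using closedness of $\eps^{-1}\rot\H{}{\gan}(\rot,\om)$ I split off its complement, and $\scp{\eps^{-1}\rot\Phi}{E}_{\L{2}{\eps}(\om)}=\scp{\rot\Phi}{E}_{\L{2}{}(\om)}=0$ for all $\Phi\in\H{}{\gan}(\rot,\om)$ characterises precisely $E\in\H{}{\gat,0}(\rot,\om)$ by the weak rotation/tangential-trace characterisation of \Cref{def:trace}~\ref{def:tangential-trace-L2} and \Cref{rem:trace} (here it is convenient that, by \Cref{regpottheo}, the test fields may be taken in $\H{1}{\gan}(\om)$, matching the trace identity).

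For the refined third decomposition I would feed the second splitting into the first. Since gradients are irrotational with the correct boundary behaviour, $\grad\H{1}{\gat}(\om)\subseteq\H{}{\gat,0}(\rot,\om)$, so inside the closed space $\H{}{\gat,0}(\rot,\om)$ I may split
\[
  \H{}{\gat,0}(\rot,\om)
  =\grad\H{1}{\gat}(\om)
  \oplus_{\L{2}{\eps}(\om)}
  \big(\H{}{\gat,0}(\rot,\om)\cap\grad\H{1}{\gat}(\om)^{\bot_{\L{2}{\eps}(\om)}}\big),
\]
and by the first decomposition the second summand equals $\H{}{\gat,0}(\rot,\om)\cap\eps^{-1}\H{}{\gan,0}(\div,\om)=\Harm{}{\gat,\gan,\eps}(\om)$. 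Inserting this into the second decomposition $\L{2}{\eps}(\om)=\H{}{\gat,0}(\rot,\om)\oplus_{\L{2}{\eps}(\om)}\eps^{-1}\rot\H{}{\gan}(\rot,\om)$ produces the three-term splitting.

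Finally, for $\dim\Harm{}{\gat,\gan,\eps}(\om)<\infty$ I would observe that $\Harm{}{\gat,\gan,\eps}(\om)$, being an intersection of kernels, is a closed subspace of $\H{}{\gat}(\rot,\om)\cap\eps^{-1}\H{}{\gan}(\div,\om)$ on which the graph norm reduces to $\norm{\,\cdot\,}_{\L{2}{}(\om)}$, since $\rot$ and $\div\eps\,\cdot$ vanish there. The compact embedding $\H{}{\gat}(\rot,\om)\cap\eps^{-1}\H{}{\gan}(\div,\om)\cpt\L{2}{}(\om)$ of \Cref{cptvechomotheo} then makes the identity on $\Harm{}{\gat,\gan,\eps}(\om)$ compact, so its unit ball is relatively compact and $\Harm{}{\gat,\gan,\eps}(\om)$ is finite-dimensional by the Riesz lemma. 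The main obstacle is thus entirely contained in the closed-range statements; both are available (one from \Cref{regpottheo}, one from Poincar\'e via compactness), after which the decompositions reduce to orthogonal-projection bookkeeping and the harmonic space is controlled by the compact embedding.
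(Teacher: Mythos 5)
The paper itself does not prove \Cref{helmtheo}: it is quoted verbatim from [Theorem 5.2] of Bauer--Pauly--Schomburg (2019), so there is no in-paper argument to compare against. Your skeleton --- closed ranges, orthogonal splitting, identification of the annihilators, refinement inside $\H{}{\gat,0}(\rot,\om)$ to produce the three-term decomposition, and finite dimensionality of $\Harm{}{\gat,\gan,\eps}(\om)$ via \Cref{cptvechomotheo} and Riesz --- is the standard route and is essentially the one taken in that reference; the last two steps and the Poincar\'e argument for closedness of $\grad\H{1}{\gat}(\om)$ are fine as written.

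There is, however, one genuine gap, and it sits exactly where the theorem is nontrivial for Lipschitz pairs with mixed boundary conditions: the identification of the orthogonal complements with the \emph{strongly} defined spaces. In this paper $\H{}{\gan,0}(\div,\om)$ and $\H{}{\gat,0}(\rot,\om)$ are closures of $\C{\infty}{\gan}(\om)$- resp.\ $\C{\infty}{\gat}(\om)$-test fields. Your annihilator computation only yields the \emph{weak} characterisation: $\scp{\grad u}{\eps E}_{\L{2}{}(\om)}=0$ for all $u\in\H{1}{\gat}(\om)$ says that $\div\eps E=0$ with vanishing weak normal trace on $\gan$, and \Cref{def:trace} cannot upgrade this, since it presupposes an $\L{2}{}$-trace and concerns $\hH{}{\gan}(\div,\om)$, not the homogeneous closure spaces. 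Concluding that such an $E$ lies in $\eps^{-1}\H{}{\gan,0}(\div,\om)$ --- and likewise that the annihilator of $\eps^{-1}\rot\H{}{\gan}(\rot,\om)$ lies in $\H{}{\gat,0}(\rot,\om)$ --- is precisely the ``weak equals strong'' density theorem ([Theorem 4.7] of the same reference, which this paper invokes separately in the proof of \Cref{estveccor1}); it is a substantial piece of analysis where the Lipschitz regularity of the interface enters, and it is not obtained by ``varying $\phi$ in the interior and on $\gan$''. A smaller point: you take closedness of $\rot\H{}{\gan}(\rot,\om)$ from \Cref{regpottheo}, which is legitimate within this paper's ordering, but in the source the Helmholtz decomposition precedes the regular potential theorem; the self-contained alternative is to obtain closedness of the $\rot$-range the same way you treat $\grad$, namely from a Friedrichs-type (Maxwell) estimate derived from \Cref{cptvechomotheo}.
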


Combining \Cref{regpottheo} and \Cref{helmtheo} shows immediately the following.

\begin{cor}[regular Helmholtz decomposition with homogeneous mixed boundary conditions]%
\label{reghelmcor}
The $\L{2}{\eps}(\om)$-orthogonal regular Helmholtz decomposition
\begin{align*}
  \L{2}{\eps}(\om)
  &=\grad\H{1}{\gat}(\om)
    \oplus_{\L{2}{\eps}(\om)}\Harm{}{\gat,\gan,\eps}(\om)
    \oplus_{\L{2}{\eps}(\om)}\eps^{-1}\rot\H{1}{\gan}(\om)
\end{align*}
holds (with continuous potential operators)
and $\Harm{}{\gat,\gan,\eps}(\om)$ has finite dimension.
More precisely, any $E\in\L{2}{\eps}(\om)$ may be $\L{2}{\eps}(\om)$-orthogonally (and regularly) decomposed into
\[
  E=\grad u_{\grad}+E_{\mcH}+\eps^{-1}\rot E_{\rot}
\]
with $u_{\grad}\in\H{1}{\gat}(\om)$, $E_{\rot}\in\H{1}{\gan}(\om)$, and 
$E_{\mcH}\in\Harm{}{\gat,\gan,\eps}(\om)$, and there exists a constant $c>0$,
independent of $E,u_{\grad},E_{\mcH},E_{\rot}$, such that
\begin{align*}
  \norm{E_{\mcH}}_{\L{2}{\eps}(\om)}
  &\leq\norm{E}_{\L{2}{\eps}(\om)},\\
  c\norm{u_{\grad}}_{\H{1}{\gat}(\om)}
  \leq\norm{\grad u_{\grad}}_{\L{2}{\eps}(\om)}
  &\leq\norm{E}_{\L{2}{\eps}(\om)},\\
  c\norm{E_{\rot}}_{\H{1}{\gan}(\om)}
  \leq\norm{\eps^{-1}\rot E_{\rot}}_{\L{2}{\eps}(\om)}
  &\leq\norm{E}_{\L{2}{\eps}(\om)}.
\end{align*}
\end{cor}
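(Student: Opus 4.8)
The plan is to simply combine \Cref{regpottheo} and \Cref{helmtheo} as the text announces, and then extract the stated norm estimates. First I would take the third (three-term) Helmholtz decomposition from \Cref{helmtheo},
\[
  \L{2}{\eps}(\om)
  =\grad\H{1}{\gat}(\om)
    \oplus_{\L{2}{\eps}(\om)}\Harm{}{\gat,\gan,\eps}(\om)
    \oplus_{\L{2}{\eps}(\om)}\eps^{-1}\rot\H{}{\gan}(\rot,\om),
\]
and replace the last summand. By \Cref{regpottheo} we have the set equality $\rot\H{}{\gan}(\rot,\om)=\rot\H{1}{\gan}(\om)$, so the closed subspace $\eps^{-1}\rot\H{}{\gan}(\rot,\om)$ coincides with $\eps^{-1}\rot\H{1}{\gan}(\om)$; substituting this in yields exactly the claimed $\L{2}{\eps}(\om)$-orthogonal regular decomposition, and the finite-dimensionality of $\Harm{}{\gat,\gan,\eps}(\om)$ is carried over verbatim from \Cref{helmtheo}.

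Next I would produce the pointwise decomposition $E=\grad u_{\grad}+E_{\mcH}+\eps^{-1}\rot E_{\rot}$ together with the three estimates. Fixing $E\in\L{2}{\eps}(\om)$, the decomposition gives three mutually $\L{2}{\eps}(\om)$-orthogonal components; call them $\grad u_{\grad}$, $E_{\mcH}$, and $\eps^{-1}\rot E_{\rot}$, with $u_{\grad}\in\H{1}{\gat}(\om)$ and $E_{\mcH}\in\Harm{}{\gat,\gan,\eps}(\om)$, and where $E_{\rot}\in\H{1}{\gan}(\om)$ is obtained by applying the continuous regular potential operator of \Cref{regpottheo} to the $\rot\H{}{\gan}(\rot,\om)$-component. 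Orthogonality immediately forces the three upper bounds $\norm{E_{\mcH}}_{\L{2}{\eps}(\om)}\leq\norm{E}_{\L{2}{\eps}(\om)}$, $\norm{\grad u_{\grad}}_{\L{2}{\eps}(\om)}\leq\norm{E}_{\L{2}{\eps}(\om)}$, and $\norm{\eps^{-1}\rot E_{\rot}}_{\L{2}{\eps}(\om)}\leq\norm{E}_{\L{2}{\eps}(\om)}$, since each summand is an orthogonal projection of $E$ and hence has norm at most $\norm{E}_{\L{2}{\eps}(\om)}$.

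The remaining lower bounds $c\norm{u_{\grad}}_{\H{1}{\gat}(\om)}\leq\norm{\grad u_{\grad}}_{\L{2}{\eps}(\om)}$ and $c\norm{E_{\rot}}_{\H{1}{\gan}(\om)}\leq\norm{\eps^{-1}\rot E_{\rot}}_{\L{2}{\eps}(\om)}$ are exactly the continuity statements of the potential operators, read in reverse: the potential $u_{\grad}$ for the gradient part can be chosen (modulo the kernel) so that the full $\H{1}{\gat}(\om)$-norm is controlled by $\norm{\grad u_{\grad}}_{\L{2}{\eps}(\om)}$ via a Friedrichs/Poincar\'e estimate combined with admissibility of $\eps$, while for $E_{\rot}$ the bound is precisely the continuity of the regular potential operator from \Cref{regpottheo} mapping $\rot\H{}{\gan}(\rot,\om)$ into $\H{1}{\gan}(\om)$, together with the two-sided equivalence of $\norm{\,\cdot\,}_{\L{2}{\eps}(\om)}$ and $\norm{\,\cdot\,}_{\L{2}{}(\om)}$ guaranteed by admissibility. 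The single constant $c>0$ is then the minimum of the two operator constants, independent of $E$.

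The proof is essentially bookkeeping, so there is no deep obstacle; the only point requiring a little care is the lower bound for $u_{\grad}$, where I must invoke a Poincar\'e-type inequality on $\H{1}{\gat}(\om)$ to pass from control of $\grad u_{\grad}$ to control of the full $\H{1}{\gat}(\om)$-norm, and ensure the weight $\eps$ only contributes bounded constants. All of this is guaranteed by the admissibility of $\eps$ and the closedness of the ranges already asserted in \Cref{helmtheo} and \Cref{regpottheo}.
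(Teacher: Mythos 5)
Your proposal is correct and follows exactly the route the paper intends: the paper gives no written proof beyond the remark that combining \Cref{regpottheo} and \Cref{helmtheo} ``shows immediately'' the corollary, and your argument is precisely that combination with the routine details (substitution of $\rot\H{1}{\gan}(\om)$ for $\rot\H{}{\gan}(\rot,\om)$, orthogonality for the upper bounds, continuity of the potential operators and a Friedrichs/Poincar\'e estimate for the lower bounds) spelled out.
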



\section{Compact Embeddings}


Our main result reads as follows:

\begin{theo}[compact embedding for vector fields with inhomogeneous mixed boundary conditions]%
\label{cptvecinhomotheo}
\[
  \hH{}{\gat}(\rot,\om)\cap\eps^{-1}\hH{}{\gan}(\div,\om)
  \cpt
  \L{2}{}(\om).
\]
\end{theo}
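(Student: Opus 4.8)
The plan is to reduce the assertion to Rellich's selection theorem by means of the regular Helmholtz decomposition of \Cref{reghelmcor}, the point being that it produces potentials of full $\H{1}{}$-regularity. Let $(E_n)$ be a bounded sequence in $\hH{}{\gat}(\rot,\om)\cap\eps^{-1}\hH{}{\gan}(\div,\om)$. First I would decompose each field $\L{2}{\eps}(\om)$-orthogonally as
\[
  E_n=\grad u_n+E_{n,\mcH}+\eps^{-1}\rot E_{n,\rot},
\]
with $u_n\in\H{1}{\gat}(\om)$, $E_{n,\rot}\in\H{1}{\gan}(\om)$, and $E_{n,\mcH}\in\Harm{}{\gat,\gan,\eps}(\om)$. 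The estimates in \Cref{reghelmcor} bound $\norm{u_n}_{\H{1}{\gat}(\om)}$ and $\norm{E_{n,\rot}}_{\H{1}{\gan}(\om)}$ by $\norm{E_n}_{\L{2}{\eps}(\om)}$, hence uniformly, and $E_{n,\mcH}$ lies in the finite-dimensional space $\Harm{}{\gat,\gan,\eps}(\om)$, so a subsequence of $(E_{n,\mcH})$ already converges.

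The next observation is that the data of $E_n$ are carried entirely by the two potential parts. Since $\grad u_n,E_{n,\mcH}\in\H{}{\gat,0}(\rot,\om)$ are irrotational with vanishing tangential trace on $\gat$, and since $E_{n,\mcH},\eps^{-1}\rot E_{n,\rot}\in\eps^{-1}\H{}{\gan,0}(\div,\om)$ (using \Cref{regpottheo} for the rotation range), one checks
\[
  \rot(\eps^{-1}\rot E_{n,\rot})=\rot E_n,\quad
  \div\eps\grad u_n=\div\eps E_n,\quad
  \tau_{\gat}(\eps^{-1}\rot E_{n,\rot})=\tau_{\gat}E_n,\quad
  \nu_{\gan}\eps\grad u_n=\nu_{\gan}\eps E_n,
\]
so that all four quantities are uniformly bounded in $\L{2}{}(\om)$ resp.\ $\L{2}{}(\gat)$, $\L{2}{}(\gan)$. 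By Rellich's theorem I pass to a subsequence with $u_n\to u$ and $E_{n,\rot}\to E_{\rot}$ strongly in $\L{2}{}(\om)$; by compactness of the scalar trace $\H{1}{\gat}(\om)\to\L{2}{}(\gan)$ and of the twisted tangential trace $\H{1}{\gan}(\om)\to\L{2}{}(\gat)$ I further arrange that $\sigma_{\gan}u_n$ and $\tau^{\times}_{\gat}E_{n,\rot}$ converge in the respective boundary spaces.

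The heart of the matter---and the step I expect to be the main obstacle---is upgrading these convergences of the \emph{potentials} to strong convergence of the \emph{derivatives} $\grad u_n$ and $\eps^{-1}\rot E_{n,\rot}$, which Rellich does not supply directly; here the inhomogeneous boundary conditions enter through the integration-by-parts formulas of \Cref{def:trace}. Testing the normal-trace identity with $\phi=u_n-u_m\in\H{1}{\gat}(\om)$ against $\eps\grad(u_n-u_m)$ yields
\begin{align*}
  \norm{\grad(u_n-u_m)}_{\L{2}{\eps}(\om)}^2
  &=-\scp{u_n-u_m}{\div\eps\grad(u_n-u_m)}_{\L{2}{}(\om)}\\
  &\quad+\scp{\sigma_{\gan}(u_n-u_m)}{\nu_{\gan}\eps\grad(u_n-u_m)}_{\L{2}{}(\gan)}.
\end{align*}
The first term vanishes in the limit because $u_n-u_m\to0$ in $\L{2}{}(\om)$ while $\div\eps\grad(u_n-u_m)=\div\eps(E_n-E_m)$ stays bounded, and the boundary term vanishes because $\sigma_{\gan}(u_n-u_m)\to0$ in $\L{2}{}(\gan)$ while $\nu_{\gan}\eps\grad(u_n-u_m)$ stays bounded there. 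Hence $(\grad u_n)$ is Cauchy in $\L{2}{\eps}(\om)$. Completely analogously, writing $F_n\coloneqq\eps^{-1}\rot E_{n,\rot}$ and testing the tangential-trace identity with $\Phi=E_{n,\rot}-E_{m,\rot}\in\H{1}{\gan}(\om)$ against $F_n-F_m$ gives
\begin{align*}
  \norm{F_n-F_m}_{\L{2}{\eps}(\om)}^2
  &=\scp{E_{n,\rot}-E_{m,\rot}}{\rot(F_n-F_m)}_{\L{2}{}(\om)}\\
  &\quad+\scp{\tau^{\times}_{\gat}(E_{n,\rot}-E_{m,\rot})}{\tau_{\gat}(F_n-F_m)}_{\L{2}{}(\gat)},
\end{align*}
and both terms vanish in the limit by the same reasoning, using $\rot(F_n-F_m)=\rot(E_n-E_m)$ bounded. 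Thus $(F_n)$ is Cauchy as well. Adding the three convergent pieces shows that the chosen subsequence of $(E_n)$ converges in $\L{2}{\eps}(\om)$; since $\eps$ is admissible this coincides with convergence in $\L{2}{}(\om)$, which proves the compact embedding.
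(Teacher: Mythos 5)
Your proposal is correct and follows essentially the same route as the paper: the regular Helmholtz decomposition of \Cref{reghelmcor}, Rellich plus trace compactness for the potentials, and the integration-by-parts identities of \Cref{def:trace} to upgrade to strong convergence of $\grad u_n$ and $\eps^{-1}\rot E_{n,\rot}$. The only cosmetic difference is that the paper invokes $\L{2}{\eps}(\om)$-orthogonality to pair $\grad(u_n-u_m)$ with $E_n-E_m$ before integrating by parts, whereas you first transfer the div/curl/trace data to the individual Helmholtz components and then integrate by parts against the component itself; the two computations are identical.
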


\begin{proof}
Let $(E_{\ell})$ be a bounded sequence in $\hH{}{\gat}(\rot,\om)\cap\eps^{-1}\hH{}{\gan}(\div,\om)$.
By the Helmholtz decomposition in \Cref{reghelmcor} we
$\L{2}{\eps}(\om)$-orthogonally and regularly decompose
\[
  E_{\ell}=\grad u_{\grad,\ell}+E_{\mcH,\ell}+\eps^{-1}\rot E_{\rot,\ell}
\]
with $u_{\grad,\ell}\in\H{1}{\gat}(\om)$, $E_{\rot,\ell}\in\H{1}{\gan}(\om)$, and
$E_{\mcH,\ell}\in\Harm{}{\gat,\gan,\eps}(\om)$, and there exists a constant $c>0$ such that
independent of $E_{\dots}$ and for all $\ell$
\[
  \norm{u_{\grad,\ell}}_{\H{1}{\gat}(\om)}
  +\norm{E_{\mcH,\ell}}_{\L{2}{\eps}(\om)}
  +\norm{E_{\rot,\ell}}_{\H{1}{\gan}(\om)}
  \leq c\norm{E_{\ell}}_{\L{2}{\eps}(\om)}.
\]
As $\Harm{}{\gat,\gan,\eps}(\om)$ is finite dimensional 
we may assume (after extracting a subsequence) that $E_{\mcH,\ell}$ converges strongly in $\L{2}{\eps}(\om)$.
Since $\H{1}{}(\om)\cpt\L{2}{}(\om)$ by Rellich's selection theorem, we may assume 
that also the regular potentials $u_{\grad,\ell}$ and $E_{\rot,\ell}$ 
converge strongly in $\L{2}{}(\om)$.
Moreover, $u_{\grad,\ell}|_{\ga}$ and $E_{\rot,\ell}|_{\ga}$ are bounded in
$\H{1/2}{}{}(\ga)$ by the (scalar) trace theorem, and thus we may assume 
by the compact embedding $\H{1/2}{}{}(\ga)\cpt\L{2}{}(\ga)$ that 
$u_{\grad,\ell}|_{\ga}$ and $E_{\rot,\ell}|_{\ga}$ converge strongly in $\L{2}{}{}(\ga)$.
In particular, $u_{\grad,\ell}|_{\gan}$ and $E_{\rot,\ell}|_{\gat}$ 
converge strongly in $\L{2}{}{}(\gan)$ and $\L{2}{}{}(\gat)$, respectively.
After all this successively taking subsequences we obtain 
(using $\L{2}{\eps}(\om)$-orthogonality and the definition 
of the $\L{2}{}(\gan)$-traces of $\nu_{\gan}\eps E_{\ell}$
and the $\L{2}{}(\gat)$-traces of $\tau_{\gat}E_{\ell}$ from \Cref{def:trace})
\begin{align*}
  \MoveEqLeft[4]
  \norm[\big]{\grad(u_{\grad,\ell}-u_{\grad,k})}_{\L{2}{\eps}(\om)}^2 \\
  &=\scp[\big]{\grad(u_{\grad,\ell}-u_{\grad,k})}{E_{\ell}-E_{k}}_{\L{2}{\eps}(\om)}\\
  &=-\scp[\big]{u_{\grad,\ell}-u_{\grad,k}}{\div\eps(E_{\ell}-E_{k})}_{\L{2}{}(\om)}
    +\scp[\big]{\sigma_{\gan}(u_{\grad,\ell}-u_{\grad,k})}{\nu_{\gan}\eps(E_{\ell}-E_{k})}_{\L{2}{}(\gan)}\\
  &\leq c\norm{u_{\grad,\ell}-u_{\grad,k}}_{\L{2}{}(\om)}
    +c\norm[\big]{(u_{\grad,\ell}-u_{\grad,k})|_{\gan}}_{\L{2}{}(\gan)}
    \to0
    \intertext{and}
  \MoveEqLeft[4]
    \norm[\big]{\eps^{-1}\rot(E_{\rot,\ell}-E_{\rot,k})}_{\L{2}{\eps}(\om)}^2 \\
  &=\scp[\big]{\eps^{-1}\rot(E_{\rot,\ell}-E_{\rot,k})}{E_{\ell}-E_{k}}_{\L{2}{\eps}(\om)}\\
  &=\scp[\big]{E_{\rot,\ell}-E_{\rot,k}}{\rot(E_{\ell}-E_{k})}_{\L{2}{}(\om)}
    +\scp[\big]{\tau^{\times}_{\gat}(E_{\rot,\ell}-E_{\rot,k})}{\tau_{\gat}(E_{\ell}-E_{k})}_{\L{2}{}(\gat)}\\
  &\leq c\norm{E_{\rot,\ell}-E_{\rot,k}}_{\L{2}{}(\om)}
    +c\norm[\big]{(E_{\rot,\ell}-E_{\rot,k})|_{\gat}}_{\L{2}{}(\gat)}
\to0.
\end{align*}
Hence, $(E_{\ell})$ contains a strongly $\L{2}{\eps}(\om)$-convergent
(and thus strongly $\L{2}{}(\om)$-convergent) subsequence.
\end{proof}

\begin{rem}[compact embedding for vector fields with inhomogeneous mixed boundary conditions]%
\label{cptdiffinhomorem}
\Cref{cptvecinhomotheo} even holds for weaker boundary data. For this, let $0\leq s<1/2$.
Taking into account the compact embedding $\H{1/2}{}{}(\ga)\cpt\H{s}{}{}(\ga)$
and looking at the latter proof, we see that
\[
  \big\{E\in\H{}{}(\rot,\om)\,:\,\tau_{\gat}E\in\H{-s}{}{}(\gat)\big\}
  \cap
  \big\{E\in\eps^{-1}\H{}{}(\div,\om)\,:\,\nu_{\gan}\eps E\in\H{-s}{}{}(\gan)\big\}
  \cpt
  \L{2}{}(\om).
\]
\end{rem}


\section{Applications}
\label{sec:applications}


\subsection{Friedrichs/Poincar\'e Type Estimates}


A first application is the following estimate:

\begin{theo}[Friedrichs/Poincar\'e type estimate for vector fields with inhomogeneous mixed boundary conditions]%
\label{estveccor1}
There exists a positive constant $c$ such that for all vector fields $E$ in 
$\hH{}{\gat}(\rot,\om)\cap\eps^{-1}\hH{}{\gan}(\div,\om)\cap\Harm{}{\gat,\gan,\eps}(\om)^{\bot_{\L{2}{\eps}(\om)}}$
it holds
\[
  c\norm{E}_{\L{2}{\eps}(\om)}
  \leq\norm{\rot E}_{\L{2}{}(\om)}
  +\norm{\div\eps E}_{\L{2}{}(\om)}
  +\norm{\tau_{\gat}E}_{\L{2}{}(\gat)}
  +\norm{\nu_{\gan}\eps E}_{\L{2}{}(\gan)}.
\]
\end{theo}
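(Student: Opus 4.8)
The plan is to argue by contradiction using the compact embedding from
\Cref{cptvecinhomotheo}, which is the standard route for turning a compactness
result into a Friedrichs/Poincar\'e type estimate. Assume the estimate fails.
Then there exists a sequence $(E_{\ell})$ in the space
$\hH{}{\gat}(\rot,\om)\cap\eps^{-1}\hH{}{\gan}(\div,\om)
\cap\Harm{}{\gat,\gan,\eps}(\om)^{\bot_{\L{2}{\eps}(\om)}}$
with $\norm{E_{\ell}}_{\L{2}{\eps}(\om)}=1$ for all $\ell$, while the right-hand
side tends to zero, i.e.
\[
  \norm{\rot E_{\ell}}_{\L{2}{}(\om)}
  +\norm{\div\eps E_{\ell}}_{\L{2}{}(\om)}
  +\norm{\tau_{\gat}E_{\ell}}_{\L{2}{}(\gat)}
  +\norm{\nu_{\gan}\eps E_{\ell}}_{\L{2}{}(\gan)}
  \to0.
\]

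First I would observe that the normalisation together with the vanishing of the
right-hand side makes $(E_{\ell})$ bounded in
$\hH{}{\gat}(\rot,\om)\cap\eps^{-1}\hH{}{\gan}(\div,\om)$: indeed the graph norm
is controlled by $\norm{E_{\ell}}_{\L{2}{\eps}(\om)}$ together with the four
quantities above, all of which are bounded. Applying \Cref{cptvecinhomotheo}
yields a subsequence (not relabelled) converging strongly in $\L{2}{}(\om)$, and
hence in $\L{2}{\eps}(\om)$, to some limit $E$. Since $\rot E_{\ell}\to0$ and
$\div\eps E_{\ell}\to0$ in $\L{2}{}(\om)$, the closedness of the operators
$\rot$ and $\div\eps$ gives $E\in\H{}{}(\rot,\om)$ with $\rot E=0$ and
$\eps E\in\H{}{}(\div,\om)$ with $\div\eps E=0$. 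Likewise, passing to the limit
in the defining variational identities from \Cref{def:trace} for
$\tau_{\gat}E_{\ell}$ and $\nu_{\gan}\eps E_{\ell}$, using that these traces
tend to zero in $\L{2}{}(\gat)$ and $\L{2}{}(\gan)$ respectively and that
$E_{\ell}\to E$ in $\L{2}{}(\om)$ while $\rot E_{\ell}\to0$ and
$\div\eps E_{\ell}\to0$, shows that $\tau_{\gat}E=0$ and $\nu_{\gan}\eps E=0$.
Consequently $E\in\H{}{\gat,0}(\rot,\om)\cap\eps^{-1}\H{}{\gan,0}(\div,\om)
=\Harm{}{\gat,\gan,\eps}(\om)$.

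Finally I would use the orthogonality constraint to reach the contradiction.
Each $E_{\ell}$ lies in $\Harm{}{\gat,\gan,\eps}(\om)^{\bot_{\L{2}{\eps}(\om)}}$,
and since this is a closed subspace of $\L{2}{\eps}(\om)$ the strong limit $E$
inherits $E\in\Harm{}{\gat,\gan,\eps}(\om)^{\bot_{\L{2}{\eps}(\om)}}$. Together
with $E\in\Harm{}{\gat,\gan,\eps}(\om)$ this forces $E=0$. But strong
$\L{2}{\eps}(\om)$-convergence preserves the norm, so
$\norm{E}_{\L{2}{\eps}(\om)}=\lim_{\ell}\norm{E_{\ell}}_{\L{2}{\eps}(\om)}=1$,
contradicting $E=0$. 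The main obstacle I anticipate is the trace step: one must
verify carefully that the $\L{2}{}$-boundary traces $\tau_{\gat}$ and
$\nu_{\gan}\eps$ pass to the limit correctly. This is handled cleanly by working
with the weak/variational characterisations in \Cref{def:trace} rather than with
pointwise traces, testing against fixed $\Phi\in\H{1}{\gan}(\om)$ and
$\phi\in\H{1}{\gat}(\om)$ and letting $\ell\to\infty$ in each identity, so that
both the interior convergence of $E_{\ell},\rot E_{\ell},\div\eps E_{\ell}$ and
the boundary convergence of the traces can be invoked simultaneously.
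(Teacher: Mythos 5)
Your proposal is correct and follows essentially the same contradiction-plus-compactness argument as the paper's own proof. The only step you pass over lightly is the inference from the vanishing of the weak ($\L{2}{}$-)traces to membership in the strongly defined spaces $\H{}{\gat,0}(\rot,\om)$ and $\eps^{-1}\H{}{\gan,0}(\div,\om)$ (which are closures of test fields); the paper justifies exactly this point by citing the result that weak and strong homogeneous mixed boundary conditions coincide for bounded strong Lipschitz pairs, a nontrivial density theorem that you should invoke explicitly rather than asserting ``consequently''.
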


\begin{proof}
For a proof we use a standard compactness argument using \Cref{cptvecinhomotheo}.
If the estimate was wrong, then there exists a sequence 
$(E_{\ell})\in\hH{}{\gat}(\rot,\om)\cap\eps^{-1}\hH{}{\gan}(\div,\om)\cap\Harm{}{\gat,\gan,\eps}(\om)^{\bot_{\L{2}{\eps}(\om)}}$
with $\norm{E_{\ell}}_{\L{2}{\eps}(\om)}=1$ and 
\[
  \norm{\rot E_{\ell}}_{\L{2}{}(\om)}
  +\norm{\div\eps E_{\ell}}_{\L{2}{}(\om)}
  +\norm{\tau_{\gat}E_{\ell}}_{\L{2}{}(\gat)}
  +\norm{\nu_{\gan}\eps E_{\ell}}_{\L{2}{}(\gan)}\to0.
\]
Thus, by \Cref{cptvecinhomotheo} (after extracting a subsequence)
\[
E_{\ell}\to E
\quad\mathrm{in}\quad
\hH{}{\gat}(\rot,\om)\cap\eps^{-1}\hH{}{\gan}(\div,\om)\cap\Harm{}{\gat,\gan,\eps}(\om)^{\bot_{\L{2}{\eps}(\om)}}
\qquad\mathrm{(strongly)}
\]
and $\rot E=0$ and $\div\eps E=0$ (by testing).
Moreover, for all $\Phi\in\C{\infty}{\gan}(\om)$ and for all
$\phi\in\C{\infty}{\gat}(\om)$
\begin{align*}
  \scp{\rot\Phi}{E_{\ell}}_{\L{2}{}(\om)}
  -\scp{\Phi}{\rot E_{\ell}}_{\L{2}{}(\om)}
  &=\scp{\tau^{\times}_{\gat}\Phi}{\tau_{\gat}E_{\ell}}_{\L{2}{}(\gat)}
    \leq c\norm{\tau_{\gat}E_{\ell}}_{\L{2}{}(\gat)}
    \to 0,\\
  \scp{\grad\phi}{\eps E_{\ell}}_{\L{2}{}(\om)}
  +\scp{\phi}{\div\eps E_{\ell}}_{\L{2}{}(\om)}
  &=\scp{\sigma_{\gan}\phi}{\nu_{\gan}\eps E_{\ell}}_{\L{2}{}(\gan)}
    \leq c\norm{\nu_{\gan}\eps E_{\ell}}_{\L{2}{}(\gan)}
    \to 0,
\end{align*}
cf.~\Cref{def:trace}, implying 
\[
  \scp{\rot\Phi}{E}_{\L{2}{}(\om)}
  =\scp{\grad\phi}{\eps E}_{\L{2}{}(\om)}
  =0.
\]
Hence, 
$E\in\H{}{\gat,0}(\rot,\om)\cap\eps^{-1}\H{}{\gan,0}(\div,\om)=\Harm{}{\gat,\gan,\eps}(\om)$ 
by~\cite[Theorem 4.7]{bauerpaulyschomburg2019a} (weak and strong homogeneous boundary conditions coincide).
This shows $E=0$ as $E\,\bot_{\L{2}{\eps}(\om)}\,\Harm{}{\gat,\gan,\eps}(\om)$, in contradiction to 
$1=\norm{E_{\ell}}_{\L{2}{\eps}(\om)}\to\norm{E}_{\L{2}{\eps}(\om)}=0$.
\end{proof}

\begin{rem}[Friedrichs/Poincar\'e type estimate for vector fields with inhomogeneous mixed boundary conditions]%
\label{estdiffinhomorem}
As in \Cref{cptdiffinhomorem} there are corresponding generalised
Friedrichs/Poincar\'e type estimates for weaker boundary data, where
the $\L{2}{}{}(\ga_{0/1})$-spaces and norms are 
replaced by $\H{-s}{}{}(\ga_{0/1})$-spaces and norms.
\end{rem}


\subsection{A div-curl Lemma}


Another immediate consequence is a div-curl-lemma.

\begin{theo}[div-curl lemma for vector fields with inhomogeneous mixed boundary conditions]%
\label{divcurllem}
Let $(E_{n})$ and $(H_{n})$ be bounded sequences
in $\hH{}{\gat}(\rot,\om)$ and $\hH{}{\gan}(\div,\om)$, respectively.
Then there exist $E\in\hH{}{\gat}(\rot,\om)$ and $H\in\hH{}{\gan}(\div,\om)$
as well as subsequences, again denoted by $(E_{n})$ and $(H_{n})$, such that
$E_{n}\rightharpoonup E$ in $\hH{}{\gat}(\rot,\om)$
and $H_{n}\rightharpoonup H$ in $\hH{}{\gan}(\div,\om)$ as well as 
\[
\scp{E_{n}}{H_{n}}_{\L{2}{}(\om)}
\to\scp{E}{H}_{\L{2}{}(\om)}.
\]
\end{theo}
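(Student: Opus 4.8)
The plan is to prove the div-curl lemma by reducing the convergence of the inner products $\scp{E_n}{H_n}_{\L{2}{}(\om)}$ to a situation where the compactness theorem \Cref{cptvecinhomotheo} applies. First I would extract weakly convergent subsequences: since $(E_n)$ is bounded in the Hilbert space $\hH{}{\gat}(\rot,\om)$ and $(H_n)$ is bounded in $\hH{}{\gan}(\div,\om)$, by reflexivity we may pass to subsequences with $E_n\rightharpoonup E$ in $\hH{}{\gat}(\rot,\om)$ and $H_n\rightharpoonup H$ in $\hH{}{\gan}(\div,\om)$. In particular $E_n\rightharpoonup E$ and $H_n\rightharpoonup H$ weakly in $\L{2}{}(\om)$. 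Weak convergence of both factors alone does not give convergence of the inner product, so the task is to upgrade one factor to strong $\L{2}{}(\om)$-convergence.

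The key idea is an asymmetric use of compactness via a Helmholtz-type splitting of one of the sequences. I would apply the regular Helmholtz decomposition from \Cref{reghelmcor} (with weight $\eps=\mathrm{id}$) to $E_n$, writing $E_n=\grad u_{\grad,n}+E_{\mcH,n}+\rot E_{\rot,n}$. The gradient and harmonic parts are harmless: the harmonic part lives in the finite-dimensional space $\Harm{}{\gat,\gan}(\om)$ and converges strongly, while for the gradient part $\grad u_{\grad,n}$ one uses $\div H_n\in\L{2}{}(\om)$ together with the integration-by-parts formula of \Cref{def:trace}~\ref{def:normal-trace-L2}, so that $\scp{\grad u_{\grad,n}}{H_n}_{\L{2}{}(\om)}$ is controlled by $u_{\grad,n}|_{\gan}$ paired against $\nu_{\gan}H_n$ and by $u_{\grad,n}$ paired against $\div H_n$; here Rellich's theorem gives strong $\L{2}{}(\om)$-convergence of $u_{\grad,n}$ and the compact trace embedding $\H{1/2}{}{}(\ga)\cpt\L{2}{}{}(\ga)$ gives strong $\L{2}{}(\gan)$-convergence of the boundary values. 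The remaining divergence-free part $\rot E_{\rot,n}$ has bounded $\L{2}{}(\om)$-norm, bounded rotation (namely $\rot E_n$), vanishing divergence, and an $\L{2}{}(\gat)$-tangential trace equal to $\tau_{\gat}E_n$; hence it lies in a bounded set of $\hH{}{\gat}(\rot,\om)\cap\H{}{\gat,0}(\div,\om)$, to which \Cref{cptvecinhomotheo} applies, yielding strong $\L{2}{}(\om)$-convergence of $\rot E_{\rot,n}$.

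With this splitting in hand I would combine the three pieces: the harmonic and gradient contributions to $\scp{E_n}{H_n}_{\L{2}{}(\om)}$ pass to the limit by the arguments above, and the divergence-free contribution passes to the limit because strong convergence of $\rot E_{\rot,n}$ against weak convergence of $H_n$ suffices. Matching each limit with the corresponding piece of the decomposition of the weak limit $E$ then gives $\scp{E_n}{H_n}_{\L{2}{}(\om)}\to\scp{E}{H}_{\L{2}{}(\om)}$.

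The hard part will be the bookkeeping of the boundary terms in the non-symmetric splitting: one must be careful that the integration-by-parts identities of \Cref{def:trace} are legitimately applicable to the regular potentials $u_{\grad,n}$ and that the tangential trace of $\rot E_{\rot,n}$ genuinely coincides with $\tau_{\gat}E_n$ (so that its $\L{2}{}(\gat)$-boundedness is inherited from the assumed boundedness of $(E_n)$ in $\hH{}{\gat}(\rot,\om)$). Verifying that the divergence-free part falls exactly within the hypotheses of \Cref{cptvecinhomotheo} — in particular that its normal trace on $\gan$ is controlled, which here is automatic because it is genuinely in $\H{}{\gat,0}(\div,\om)$ — is the crux that makes the compactness theorem do its work.
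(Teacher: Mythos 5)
Your proposal is correct and follows essentially the same route as the paper: extract weak limits, split $E_n$ by the Helmholtz decomposition into a gradient part (handled via Rellich, the compact trace embedding, and integration by parts against $\div H_n$ and $\nu_{\gan}H_n$) and a divergence-free part to which \Cref{cptvecinhomotheo} applies; the paper merely uses the two-term decomposition of \Cref{helmtheo} (absorbing the harmonic fields into the divergence-free part) rather than the three-term one of \Cref{reghelmcor}. One subscript slip: the curl part $\rot E_{\rot,n}$ with $E_{\rot,n}\in\H{1}{\gan}(\om)$ lies in $\H{}{\gan,0}(\div,\om)$, not $\H{}{\gat,0}(\div,\om)$ --- which is exactly what you need, since it is the vanishing normal trace on $\gan$ that places it in $\hH{}{\gan}(\div,\om)$.
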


\begin{proof}
We follow in closed lines the proof of \cite[Theorem 3.1]{pauly2019c}.
Let $(E_{n})$ and $(H_{n})$ be as stated.
First, we pick subsequences, again denoted by $(E_{n})$ and $(H_{n})$,
and $E$ and $H$, such that
$E_{n}\rightharpoonup E$ in $\hH{}{\gat}(\rot,\om)$
and $H_{n}\rightharpoonup H$ in $\hH{}{\gan}(\div,\om)$.
In particular, 
\begin{align}
\label{weakconvtrace}
\nu_{\gan}H_{n}\rightharpoonup\nu_{\gan}H
\quad\text{in}\quad\L{2}{}(\gan).
\end{align}

To see~\eqref{weakconvtrace}, let $\nu_{\gan}H_{n}\rightharpoonup H_{\gan}$ in $\L{2}{}(\gan)$.
Since for all $\phi\in\H{1}{\gat}(\om)$
\begin{align*}
\scp{\sigma_{\gan}\phi}{H_{\gan}}_{\L{2}{}(\gan)}
\leftarrow\scp{\sigma_{\gan}\phi}{\nu_{\gan}H_{n}}_{\L{2}{}(\gan)}
=&\;\scp{\grad\phi}{H_{n}}_{\L{2}{}(\om)}
+\scp{\phi}{\div H_{n}}_{\L{2}{}(\om)}\\
\to&\;\scp{\grad\phi}{H}_{\L{2}{}(\om)}
+\scp{\phi}{\div H}_{\L{2}{}(\om)},
\end{align*}
we get $H\in\hH{}{\gan}(\div,\om)$ and $\nu_{\gan}H=H_{\gan}$.
Moreover,
$\scp{\sigma_{\gan}\phi}{\nu_{\gan}H_{n}}_{\L{2}{}(\gan)}
\to\scp{\sigma_{\gan}\phi}{\nu_{\gan}H}_{\L{2}{}(\gan)}$.
As $\sigma_{\gan}\H{1}{\gat}(\om)$ is dense in $\L{2}{}(\gan)$
and $\big(\scp{\,\cdot\,}{\nu_{\gan}H_{n}}_{\L{2}{}(\gan)}\big)$
is uniformly bounded with respect to $n$ we obtain \eqref{weakconvtrace}.

By \Cref{helmtheo} we have the orthogonal Helmholtz decomposition
\[
\hH{}{\gat}(\rot,\om)\ni E_{n}=\grad u_{n}+\widetilde{E}_{n}
\]
with $u_{n}\in\H{1}{\gat}(\om)$
and $\widetilde{E}_{n}\in\hH{}{\gat}(\rot,\om)\cap\H{}{\gan,0}(\div,\om)$
as $\grad\H{1}{\gat}(\om)\subset\H{}{\gat,0}(\rot,\om)\subset\hH{}{\gat}(\rot,\om)$.
By orthogonality and the Friedrichs/Poincar\'e estimate,
$(u_{n})$ is bounded in $\H{1}{\gat}(\om)$ and hence contains 
a strongly $\L{2}{}(\om)$-convergent subsequence, again denoted by $(u_{n})$.
(For $\gat=\emptyset$ we may have to add a constant to each $u_{n}$.)
Moreover, as $(u_{n}|_{\ga})$ is bounded in
$\H{1/2}{}{}(\ga)\cpt\L{2}{}(\ga)$ we may assume that 
$(u_{n}|_{\ga})$ converges strongly in $\L{2}{}(\ga)$.
In particular, $(\sigma_{\gan}u_{n})=(u_{n}|_{\gan})$ converges strongly in $\L{2}{}(\gan)$.
The sequence $(\widetilde{E}_{n})$ is bounded in $\hH{}{\gat}(\rot,\om)\cap\H{}{\gan,0}(\div,\om)$
by orthogonality and since $\rot\widetilde{E}_{n}=\rot E_{n}$
and $\tau_{\gat}\widetilde{E}_{n}=\tau_{\gat}E_{n}$.
\Cref{cptvecinhomotheo} yields a strongly $\L{2}{}(\om)$-convergent subsequence, 
again denoted by $(\widetilde{E}_{n})$.
Hence, there exist $u\in\H{1}{\gat}(\om)$ and 
$\widetilde{E}\in\hH{}{\gat}(\rot,\om)\cap\H{}{\gan,0}(\div,\om)$
such that $u_{n}\rightharpoonup u$ in $\H{1}{\gat}(\om)$ and 
$u_{n}\to u$ in $\L{2}{}(\om)$ and 
$\sigma_{\gan}u_{n}\to\sigma_{\gan}u$ in $\L{2}{}(\gan)$
as well as $\widetilde{E}_{n}\rightharpoonup\widetilde{E}$ in 
$\hH{}{\gat}(\rot,\om)\cap\H{}{\gan,0}(\div,\om)$
and $\widetilde{E}_{n}\to\widetilde{E}$ in $\L{2}{}(\om)$.
Finally, we compute
\begin{align*}
  \scp{E_{n}}{H_{n}}_{\L{2}{}(\om)}
  &= \scp{\grad u_{n}}{H_{n}}_{\L{2}{}(\om)}
    +\scp{\widetilde{E}_{n}}{H_{n}}_{\L{2}{}(\om)}\\
  &= -\scp{u_{n}}{\div H_{n}}_{\L{2}{}(\om)}
    +\scp{\sigma_{\gan}u_{n}}{\nu_{\gan}H_{n}}_{\L{2}{}(\gan)}
    +\scp{\widetilde{E}_{n}}{H_{n}}_{\L{2}{}(\om)}\\
  &\to -\scp{u}{\div H}_{\L{2}{}(\om)}
    +\scp{\sigma_{\gan}u}{\nu_{\gan}H}_{\L{2}{}(\gan)}
    +\scp{\widetilde{E}}{H}_{\L{2}{}(\om)}\\
  &= \scp{\grad u}{\div H}_{\L{2}{}(\om)}
    +\scp{\widetilde{E}}{H}_{\L{2}{}(\om)}
    =\scp{E}{H}_{\L{2}{}(\om)},
\end{align*}
since indeed $E=\grad u+\widetilde{E}$ holds by the weak convergence.
\end{proof}

\begin{rem}[div-curl lemma for vector fields with inhomogeneous mixed boundary conditions]%
\label{divcurllemrem}
As in \Cref{cptdiffinhomorem} and \Cref{estdiffinhomorem} there are corresponding generalised
div-curl lemmas for weaker boundary data, where
the $\L{2}{}{}(\ga_{0/1})$-spaces and norms are 
replaced by $\H{-s}{}{}(\ga_{0/1})$-spaces and norms.
\end{rem}


\subsection{Maxwell's Equations with Mixed Impedance Type Boundary Conditions}
\label{sec:maxwell}


Let $\eps$, $\mu$ be admissible and time-independent matrix fields,
and let $T,k\in\reals_{+}$.
In $\tdom\times\om$ with $\tdom\coloneqq(0,T)$ we consider Maxwell's equations
with mixed tangential and impedance boundary conditions
\begin{subequations}
\label{eq:maxwell}
\begin{align}
\label{eq:maxwell-ampere-law}
\p_{t}E-\eps^{-1}\rot H&=F\in\L{2}{}\big(\tdom,\L{2}{\eps}(\om)\big),
&
&\text{(Amp\`ere/Maxwell law)}\\
\label{eq:maxwell-faraday-law}
\p_{t}H+\mu^{-1}\rot E&=G\in\L{2}{}\big(\tdom,\L{2}{\mu}(\om)\big),
&&\text{(Faraday/Maxwell law)}
\\
\label{eq:maxwell-gauss-law}
\div \eps E &= \rho, && \text{(Gau{\ss} law)}
\\
\label{eq:maxwell-magnetic-gauss-law}
\div \mu H &= 0, && \text{(Gau{\ss} law for magnetism)}
\\
\label{eq:maxwell-conductor}
\tau_{\gat}E &= 0,
&&\text{(perfect conductor bc)}\\
\label{eq:maxwell-normal-trace}
\nu_{\gat}H &= f,
&&\text{(normal trace bc)}\\
\label{maxwell-impedance-bc}
\tau_{\gan}E+k\tau^{\times}_{\gan}H&=0,
&&\text{(impedance bc)}\\
\label{maxwell-electric-iv}
E(0)&=E_{0}\in\L{2}{\eps}(\om),
&&\text{(electric initial value)}\\
\label{maxwell-magnetic-iv}
H(0)&=H_{0}\in\L{2}{\mu}(\om).
&&\text{(magnetic initial value)}
\end{align}
\end{subequations}

Note that the impedance boundary condition, also called Leontovich boundary condition,
is of Robin type and that the impedance is given by $\lambda=1/k=\sqrt{\eps/\mu}$,
if $\eps,\mu\in\reals_{+}$.

Despite of other recent and very powerful approaches
such as the concept of ``evolutionary equations'', see the pioneering work of Rainer Picard,
e.g., \cite{picard2009a,picard2020a},
one can use classical semigroup theory for solving the Maxwell system~\eqref{eq:maxwell}.

We will split the system~\eqref{eq:maxwell} into two static systems and a dynamic system.
For simplicity we set $\eps = \mu = 1$ and $F = G = 0$. 
The static systems are
\begin{subequations}
  \label{eq:maxwell-static}
  \begin{align}
    \rot E &= 0, & \rot H &= 0, \\
    \div E &= \rho, & \div H &= 0, \\
    \tau_{\gat} E &= 0, & \nu_{\gat} H &= f, \\
    \tau_{\gan} E &= -k g, & \tau_{\gan}^{\times} H &= g,
  \end{align}
\end{subequations}
where $g$ is any suitable tangential vector field in $\L{2}{}(\gan)$. 
For simplicity we put $g=0$, then these two systems are 
solvable by~\cite[Theorem~5.6]{bauerpaulyschomburg2016a}. 
However, the same result also gives conditions for which $g\neq 0$ this system is solvable.
The dynamic system is
\begin{subequations}
  \label{eq:maxwell-dynamic}
  \begin{align}
    \label{eq:maxwell-dynamic-ampere}
    \p_{t} E &= \rot H, \\
    \label{eq:maxwell-dynamic-faraday}
    \p_{t} H &= -\rot E, \\
    \label{eq:maxwell-dynamic-gauss}
    \div E &= 0, \\
    \label{eq:maxwell-dynamic-gauss-magnetism}
    \div H &= 0, \\
    \nu_{\gat} H &= 0, \\
    \label{eq:maxwell-dynamic-conductor}
    \tau_{\gat} E &= 0, \\
    \label{eq:maxwell-dynamic-impedance}
    \tau_{\gan} E + k \tau_{\gan}^{\times} H &= 0.
  \end{align}
\end{subequations}
The initial conditions for the dynamic system are $E(0) = E_{0} - E_{\mathrm{stat}}$ and $H(0) = H_{0} - H_{\mathrm{stat}}$, where $E_{\mathrm{stat}}$ and $H_{\mathrm{stat}}$ are the solutions of the two static systems~\eqref{eq:maxwell-static}.
We can write~\eqref{eq:maxwell-dynamic-ampere} and~\eqref{eq:maxwell-dynamic-faraday} as
\begin{align*}
  \p_t
  \begin{bmatrix}    E \\ H  \end{bmatrix}
  =
  \underbrace{
  \begin{bmatrix}    0 & \rot \\    -\rot & 0  \end{bmatrix}
  }_{\phantom{\A_{0}}\eqqcolon \A_{0}}
  \begin{bmatrix}    E \\ H  \end{bmatrix},
\end{align*}
and the boundary conditions~\eqref{eq:maxwell-dynamic-conductor}
and~\eqref{eq:maxwell-dynamic-impedance}
shall be covered by the domain of $\A_{0}$:
\[
  \dom \A_{0} \coloneqq
  \left\{
  	(E,H)    \in
    \hH{}{\ga}(\rot,\Omega) \times \hH{}{\gan}(\rot,\Omega)
    \;\middle|\;
    \tau_{\gat} E = 0,\;
    \tau_{\gan} E + k \tau_{\gan}^{\times} H = 0
  \right\}.
\]
Here, we did ignore the equations $\div E = 0$, $\div H = 0$ and $\nu_{\gat} H = 0$.
However, $\A_{0}$ is a generator of a $\C{}{0}$-semigroup by~\cite[Example~8.10]{skrepek-phs-1}
or~\cite[Section 5]{weiss-staffans-maxwell},
where the input function is $u=0$.
(In these sources they regard boundary control systems and system nodes, respectively.
One condition of those concepts is that the system with $u=0$ 
is described by a generator of a $\C{}{0}$-semigroup).
The next lemma provides a tool to respect the remaining
conditions of~\eqref{eq:maxwell-dynamic} as well.

\begin{lem}\label{le:invariant-subspace}%
  Let $\T(\cdot)$ be a $\C{}{0}$-semigroup on a Banach space $X$, 
  and let $\A$ be its generator.
  Then every subspace $V \supseteq \ran \A$ is invariant under $\T(\cdot)$.
  Moreover, $\A\big\vert_{V}$ generates the strongly continuous semigroup
  $\T_{V}(\cdot) \coloneqq \T(\cdot)\big\vert_{V}$, 
  if $V$ is additionally closed in $X$.
\end{lem}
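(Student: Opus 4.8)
The plan is to reduce both assertions to the fundamental identity of semigroup theory: for every $x\in X$ and every $t\geq 0$ one has $\int_0^t\T(s)x\dx[s]\in\dom\A$ together with $\A\int_0^t\T(s)x\dx[s]=\T(t)x-x$. First I would recall this identity (it is proved in one line by computing $h^{-1}\bigl(\T(h)-\mathrm{id}\bigr)\int_0^t\T(s)x\dx[s]$ and letting $h\to0^+$, using strong continuity), stressing that it holds for \emph{all} $x\in X$, not merely for $x\in\dom\A$. The decisive consequence is the range characterisation $\T(t)x-x\in\ran\A$ for every $x\in X$.

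Invariance then follows at once and, importantly, without any closedness assumption on $V$: for $x\in V$ I would write $\T(t)x=x+\bigl(\T(t)x-x\bigr)$ and observe that $x\in V$ while $\T(t)x-x\in\ran\A\subseteq V$, whence $\T(t)x\in V$ since $V$ is a linear subspace. This gives $\T(t)V\subseteq V$ for all $t\geq 0$.

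For the second statement I would assume $V$ closed, so that $(V,\norm{\,\cdot\,}_X)$ is itself a Banach space, and set $\T_V(t)\coloneqq\T(t)\big\vert_V$; each $\T_V(t)$ is bounded on $V$ as the restriction of a bounded operator. The semigroup law, $\T_V(0)=\mathrm{id}_V$, and strong continuity are inherited from $\T(\cdot)$ via the invariance just established (for the composition law one needs $\T(s)x\in V$ before applying $\T(t)$, which is precisely invariance). It then remains to identify the generator $B$ of $\T_V(\cdot)$ with $\A\big\vert_V$. Here I would use that the difference quotients $t^{-1}\bigl(\T(t)x-x\bigr)$ already lie in $V$ for every $x\in V$ (again because $\T(t)x-x\in\ran\A\subseteq V$); since $V$ is closed, such a quotient converges in $V$ if and only if it converges in $X$, and the limits coincide. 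This yields $\dom B=V\cap\dom\A$ with $Bx=\A x$ there. Finally, because $\ran\A\subseteq V$ forces $\A x\in V$ for every $x\in\dom\A$, the plain restriction $\A\big\vert_V$ already coincides with the part of $\A$ in $V$, so no compatibility condition is lost.

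All computations are routine; the step I would flag as the conceptual heart is the range identity $\T(t)x-x\in\ran\A$ valid for \emph{every} $x\in X$. The pitfall to avoid is invoking instead the weaker identity $\T(t)x-x=\int_0^t\T(s)\A x\dx[s]$, which holds only on $\dom\A$ and whose integrand $\T(s)\A x$ one cannot place in $V$ without already knowing invariance (a circularity), and which would moreover require $V$ to be closed even for the first assertion. Retaining the range description of $\T(t)x-x$ is exactly what makes the invariance hold for arbitrary, not necessarily closed, subspaces $V\supseteq\ran\A$.
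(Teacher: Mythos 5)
Your proof is correct and follows essentially the same route as the paper: the paper's one-line argument is precisely the identity $\A\int_{0}^{t}\T(s)x\dx[s]=\T(t)x-x\in\ran\A\subseteq V$ for $x\in V$, and it then cites Engel--Nagel for the fact that the generator of the restricted semigroup on a closed invariant subspace is the part of $\A$ in $V$, which you correctly observe equals $\A\big\vert_{V}$ because $\ran\A\subseteq V$. Your write-up merely spells out the details the paper delegates to the reference.
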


\begin{proof}
  Let $t \geq 0$ and let $x\in V$. 
  Then $\ran\A\ni\A\int_{0}^{t}\T(s)x\dx[s]=\T(t)x-x$
  and hence $\T(t)x \in V$.
  The remaining assertion follows from~\cite[Chapter II, Section 2.3]{engel-nagel}.
\end{proof}

Therefore, it is left to show that the remaining conditions 
establish a closed and invariant subspace under the semigroup $\T_{0}$ generated 
by $\A_{0}$ or contains $\ran \A_{0}$.
Note that by \Cref{regpottheo}
\begin{align*}
S&\coloneqq\left\{(E,H)\;\middle|\;\div E = 0,\,\div H=0,\,\nu_{\gat}H=0\right\}\\
&\;=\H{}{0}(\div,\Omega)\times\H{}{\gat,0}(\div,\Omega)\\ 
&\;=\big(\rot \H{}{}(\rot,\Omega) \times \rot \H{}{\gat}(\rot,\Omega)\big)
\oplus
\big(\Harm{}{\ga,\emptyset}(\Omega) \times \Harm{}{\gan,\gat}(\Omega)\big).
\end{align*}
This space is closed as the intersection of kernels of closed operators.
Clearly, $\Harm{}{\ga,\emptyset}(\Omega) \times \Harm{}{\gan,\gat}(\Omega)$ 
is invariant under $\T_{0}$, since every
$(E,H)\in\Harm{}{\ga,\emptyset}(\Omega) \times \Harm{}{\gan,\gat}(\Omega)$
is a constant in time solution of the system~\eqref{eq:maxwell-dynamic}, i.e.,
\[
  \T_{0}(t)
  \begin{bmatrix}    E \\ H  \end{bmatrix}
  =
  \begin{bmatrix}    E \\ H  \end{bmatrix}.
\]
By
\[
  \rot \H{}{}(\rot,\Omega) \times \rot \H{}{\gat}(\rot,\Omega)
  =
  \begin{bmatrix}    0 & \rot \\    -\rot & 0  \end{bmatrix}
  \big(\H{}{\gat}(\rot,\Omega) \times \H{}{}(\rot,\Omega)\big)
  \supseteq \ran \A_{0}
\]
and \Cref{le:invariant-subspace} we have that also
$\rot \H{}{}(\rot,\Omega) \times \rot \H{}{\gat}(\rot,\Omega)$
is invariant under $\T_{0}$.
Hence, \Cref{le:invariant-subspace} and \Cref{cptvecinhomotheo} imply the next theorem.

\begin{theo}
  $\A \coloneqq \A_{0}\big\vert_{S}$ is a generator of a $\C{}{0}$-semigroup and
  \[
    \dom \A \subseteq 
    \big(\hH{}{\ga}(\rot,\Omega) \cap \H{}{}(\div,\Omega)\big) 
    \times 
    \big(\hH{}{\gan}(\rot,\Omega) \cap \H{}{\gat}(\div,\Omega)\big)
    \cpt \L{2}{}(\Omega).
  \]
  Consequently, every resolvent operator of $\A$ is compact.
\end{theo}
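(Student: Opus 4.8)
The plan is to assemble the statement from three ingredients already at hand --- \Cref{le:invariant-subspace}, the structural description of $S$ obtained above, and the compact embedding \Cref{cptvecinhomotheo} --- the only genuinely new work being an energy estimate for the resolvent. First I would settle the generator property and the domain inclusion simultaneously. Since $S$ is closed and $S\supseteq\ran\A_{0}$, \Cref{le:invariant-subspace} applies with $V=S$ and shows that $\A=\A_{0}\big\vert_{S}$ generates the $\C{}{0}$-semigroup $\T_{0}\big\vert_{S}$. Because $\ran\A_{0}\subseteq S$, every $(E,H)\in\dom\A_{0}$ satisfies $\A_{0}(E,H)\in S$ automatically, so $\dom\A=\dom\A_{0}\cap S$ with no further constraint. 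Unpacking the two memberships, $(E,H)\in\dom\A_{0}$ gives $E\in\hH{}{\ga}(\rot,\Omega)$ and $H\in\hH{}{\gan}(\rot,\Omega)$, while $(E,H)\in S$ gives $\div E=0$, $\div H=0$ and $\nu_{\gat}H=0$; hence $E\in\hH{}{\ga}(\rot,\Omega)\cap\H{}{}(\div,\Omega)$ and $H\in\hH{}{\gan}(\rot,\Omega)\cap\H{}{\gat}(\div,\Omega)$, which is the asserted inclusion.

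Next I would establish the compact embedding of the target product space factorwise from \Cref{cptvecinhomotheo} with $\eps=1$. For the first factor, choose the boundary split $(\ga,\emptyset)$: then $\hH{}{\emptyset}(\div,\Omega)=\H{}{}(\div,\Omega)$, and the theorem gives $\hH{}{\ga}(\rot,\Omega)\cap\H{}{}(\div,\Omega)\cpt\L{2}{}(\Omega)$. For the second factor I would interchange the two boundary parts --- legitimate because $(\Omega,\gan)$ is again a bounded strong Lipschitz pair --- so that \Cref{cptvecinhomotheo} yields $\hH{}{\gan}(\rot,\Omega)\cap\hH{}{\gat}(\div,\Omega)\cpt\L{2}{}(\Omega)$; since $\H{}{\gat}(\div,\Omega)\subseteq\hH{}{\gat}(\div,\Omega)$, the factor $\hH{}{\gan}(\rot,\Omega)\cap\H{}{\gat}(\div,\Omega)$ sits inside a compactly embedded space and therefore embeds compactly as well. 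The Cartesian product of two compact embeddings is compact, proving the displayed compact embedding.

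The main obstacle is the compactness of the resolvents, because the graph norm of $\A$ controls only $\norm{(E,H)}_{\L{2}{}(\Omega)}$ together with $\norm{\rot E}_{\L{2}{}(\Omega)}$ and $\norm{\rot H}_{\L{2}{}(\Omega)}$, whereas the norm of the target product space additionally involves the tangential boundary traces $\tau_{\ga}E$ and $\tau_{\gan}H$; without a bound on these the compact embedding cannot be invoked on $\dom\A$ carrying the graph norm. The impedance condition supplies the missing control. A Green-type integration-by-parts formula for $\rot$ reduces $\scp{\A(E,H)}{(E,H)}_{\L{2}{}(\Omega)}$ to a tangential boundary integral over $\gan$ (the $\gat$-contribution vanishing because $\tau_{\gat}E=0$), and the impedance relation $\tau_{\gan}E+k\tau^{\times}_{\gan}H=0$ turns this integral into $-k\norm{\tau_{\gan}H}_{\L{2}{}(\gan)}^{2}$, so that $\A$ is dissipative and $(0,\infty)\subseteq\rho(\A)$. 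Testing the resolvent equation $(\lambda-\A)(E,H)=f$ with $(E,H)$ for $\lambda>0$ then gives
\[
  \lambda\norm{(E,H)}_{\L{2}{}(\Omega)}^{2}
  +k\norm{\tau_{\gan}H}_{\L{2}{}(\gan)}^{2}
  =\scp{f}{(E,H)}_{\L{2}{}(\Omega)},
\]
whence $\norm{\tau_{\gan}H}_{\L{2}{}(\gan)}\lesssim\norm{f}_{\L{2}{}(\Omega)}$. Through the impedance relation the same bound controls $\norm{\tau_{\ga}E}_{\L{2}{}(\ga)}=k\norm{\tau_{\gan}H}_{\L{2}{}(\gan)}$, the resolvent equation itself bounds $\norm{\rot E}_{\L{2}{}(\Omega)}$ and $\norm{\rot H}_{\L{2}{}(\Omega)}$, and the divergences together with $\nu_{\gat}H$ vanish since $(E,H)\in S$. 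Thus $(\lambda-\A)^{-1}$ maps bounded sets into bounded subsets of the target product space; composing with its compact embedding into $\L{2}{}(\Omega)$ shows $(\lambda-\A)^{-1}$ to be compact, and the resolvent identity propagates compactness to every point of the resolvent set.
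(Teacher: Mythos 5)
Your argument is correct and rests on the same two pillars the paper invokes --- \Cref{le:invariant-subspace} for the generator property and \Cref{cptvecinhomotheo} (applied twice, with the boundary splits $(\ga,\emptyset)$ and $(\gan,\gat)$, the latter using $\H{}{\gat}(\div,\om)\subseteq\hH{}{\gat}(\div,\om)$) for the compact embedding; the paper states the theorem as an immediate consequence of these and offers no further proof. Two points of genuine difference are worth recording. First, you apply the invariance lemma directly to $S$ via $\ran\A_{0}\subseteq\rot\H{}{}(\rot,\om)\times\rot\H{}{\gat}(\rot,\om)\subseteq S$, whereas the paper checks the two summands of $S$ separately (the harmonic summand by exhibiting constant-in-time solutions); your shortcut is legitimate, and the paper's finer decomposition is only needed for the alternative further restriction to the orthogonal complement of the harmonic fields discussed afterwards. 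Second, and more substantially, you correctly observe that the displayed chain $\dom\A\subseteq X\cpt\L{2}{}(\om)$ does not by itself give compact resolvents: the resolvent maps $\L{2}{}(\om)$ boundedly into $\dom\A$ equipped with the \emph{graph norm}, which contains no visible control of the $\L{2}{}$-boundary-trace contributions to the norm of $X$. Your dissipativity identity $\scp{\A(E,H)}{(E,H)}_{\L{2}{}(\om)}=-k\norm{\tau^{\times}_{\gan}H}_{\L{2}{}(\gan)}^{2}$ (the $\gat$-terms vanishing by $\tau_{\gat}E=0$) supplies exactly the missing bound $k\norm{\tau^{\times}_{\gan}H}_{\L{2}{}(\gan)}^{2}\leq\norm{\A(E,H)}_{\L{2}{}(\om)}\norm{(E,H)}_{\L{2}{}(\om)}$, and the impedance relation transfers it to $\tau_{\ga}E$ since $\norm{\tau_{\gan}E}_{\L{2}{}(\gan)}=k\norm{\tau^{\times}_{\gan}H}_{\L{2}{}(\gan)}$. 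This closes a step the paper leaves entirely implicit. Two minor technical caveats: the integration-by-parts formula you use pairs two fields that are both merely in $\hH{}{}$-spaces, whereas \Cref{def:trace} defines the trace pairing against $\H{1}{}$-test fields, so a density argument (or an appeal to the contraction-semigroup property established in the cited port-Hamiltonian references) should be mentioned; and in the complex setting the identity holds for the real part, which suffices.
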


If $\Harm{}{\ga,\emptyset}(\Omega)=\{0\}$ and
$\Harm{}{\gan,\gat}(\Omega)=\{0\}$,
then $0$ is in the resolvent set of $\A$ and $\A^{-1}$ is compact. Alternatively, 
we can further restrict $\A$ to 
$\Harm{}{\ga,\emptyset}(\Omega)^{\perp_{\L{2}{}(\om)}} 
\times \Harm{}{\gan,\gat}(\Omega)^{\perp_{\L{2}{}(\om)}}$. 
This would also match our separation of static solutions and dynamic solutions,
since solutions with initial condition in 
$\Harm{}{\ga,\emptyset}(\Omega) \times \Harm{}{\gan,\gat}(\Omega)$ are constant in time.


\subsection{Wave Equation with Mixed Impedance Type Boundary Conditions}%
\label{sec:wave}


For the scalar wave equation the situation is even simpler 
since traces of $\H{1}{}(\om)$-functions already belong to $\L{2}{}(\ga)$,
even to $\H{1/2}{}(\ga)$.
In $\tdom\times\om$ we consider the wave equation in first order form
(linear acoustics) with mixed scalar and impedance boundary conditions
\begin{align*}
  \p_{t}w - \div v&=0,\\
  \p_{t}v - \grad w&=0,\\
  \sigma_{\gat}w&=0,\\
  \sigma_{\gan}w+k\nu_{\gan}v&=0,\\
  w(0)&=w_{0}\in\L{2}{}(\om),\\
  v(0)&=v_{0}\in\L{2}{}(\om).
\end{align*}
We write the system as
\[
  \p_{t}
  \begin{bmatrix}    w \\ v  \end{bmatrix}
  =
  \underbrace{
  \begin{bmatrix}    0 & \div \\    \grad & 0  \end{bmatrix}
  }_{\phantom{\A_{0}}\eqqcolon \A_{0}}
  \begin{bmatrix}    w \\ v  \end{bmatrix}
\]
with
\[
  \dom \A_{0} \coloneqq
  \big\{
  (w,v)
  \in \H{1}{\gat}(\Omega) \times \hH{}{\gan}(\div,\Omega)
  \;\big|\;
  \sigma_{\gan}w + k\nu_{\gan} = 0
  \big\}.
\]
As before, by~\cite[Theorem~4.4]{kurula-zwart-2015} or~\cite[Example~8.9]{skrepek-phs-1}, 
$\A_{0}$ is a generator of $\C{}{0}$-semigroup. 
Again, we want to separate the static solutions from the dynamic system. The static solutions are given by $\ker \A_{0}$, which can be characterise by
\[
  \ker \A_{0} = \{0\} \times \H{}{\gan,0}(\div,\Omega),
\]
where we assumed $\gat \neq \emptyset$, otherwise the first component can also be constant and the second component would be in $\hH{}{\gan,0}(\div,\Omega)$.
By \Cref{helmtheo}, the orthogonal complement of $\ker \A_{0}$ is
\[
  S \coloneqq \L{2}{}(\Omega) \times \grad \H{1}{\gat}(\Omega).
\]
Note that $S$ contains $\ran \A_{0}$ and is therefore (by \Cref{le:invariant-subspace}) an invariant subspace under the semigroup generated by $\A_{0}$. Moreover, note that $\grad \H{1}{\gat}(\Omega) \subseteq \H{}{\gat,0}(\rot,\Omega)$ and that $S$ is closed. 
Hence, \Cref{le:invariant-subspace} and \Cref{cptvecinhomotheo} imply the next theorem.

\begin{theo}
  $\A \coloneqq \A_{0}\big\vert_{S}$ is a generator of $\C{}{0}$-semigroup and
  \[
    \dom \A    \subseteq
    \H{1}{\gat}(\Omega) \times \big(\hH{}{\gan}(\div,\Omega) \cap \H{}{\gat,0}(\rot,\Omega) \big)
    \cpt \L{2}{}(\Omega).
  \]
  Consequently, every resolvent operator of $\A$ is compact.
\end{theo}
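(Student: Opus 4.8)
The plan is to verify the three assertions of the theorem in turn --- generation, the domain inclusion, and the compact embedding --- and then to deduce compact resolvents. For the generation statement I would invoke \Cref{le:invariant-subspace}: since $\A_{0}$ generates a $\C{}{0}$-semigroup $\T_{0}$ (by the cited results of Kurula--Zwart and Skrepek) and $S=\L{2}{}(\Omega)\times\grad\H{1}{\gat}(\Omega)$ is closed with $S\supseteq\ran\A_{0}$, the subspace $S$ is invariant and $\A=\A_{0}\big\vert_{S}$ generates the restriction $\T_{0}\big\vert_{S}$. Because $\ran\A_{0}\subseteq S$, the part of $\A_{0}$ in $S$ imposes no condition beyond membership in $S$, so $\dom\A=\dom\A_{0}\cap S$; concretely, $(w,v)\in\dom\A$ exactly when $w\in\H{1}{\gat}(\Omega)$, $v\in\hH{}{\gan}(\div,\Omega)\cap\grad\H{1}{\gat}(\Omega)$, and $\sigma_{\gan}w+k\nu_{\gan}v=0$. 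Invoking the inclusion $\grad\H{1}{\gat}(\Omega)\subseteq\H{}{\gat,0}(\rot,\Omega)$ recorded above then places every such $v$ in $\hH{}{\gan}(\div,\Omega)\cap\H{}{\gat,0}(\rot,\Omega)$, which yields the claimed inclusion of $\dom\A$.

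For the compact embedding I would treat the two factors separately. The first factor is handled by Rellich's selection theorem, $\H{1}{\gat}(\Omega)\cpt\L{2}{}(\Omega)$. For the second factor, the homogeneous tangential condition gives $\tau_{\gat}=0\in\L{2}{}(\gat)$, hence $\H{}{\gat,0}(\rot,\Omega)\subseteq\hH{}{\gat}(\rot,\Omega)$ and
\[
  \hH{}{\gan}(\div,\Omega)\cap\H{}{\gat,0}(\rot,\Omega)
  \subseteq
  \hH{}{\gat}(\rot,\Omega)\cap\hH{}{\gan}(\div,\Omega).
\]
The right-hand side embeds compactly into $\L{2}{}(\Omega)$ by \Cref{cptvecinhomotheo} with $\eps=1$. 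Since both factors embed compactly, so does their product, which is precisely the displayed embedding.

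To upgrade this to compactness of the resolvent operators, I would check that on $\dom\A$ the graph norm of $\A$ dominates the natural norm of the target product space. The divergence and gradient contributions are immediate from $\A(w,v)=(\div v,\grad w)$, so the only delicate term is $\norm{\nu_{\gan}v}_{\L{2}{}(\gan)}$; here the impedance relation rewrites it as $k^{-1}\norm{\sigma_{\gan}w}_{\L{2}{}(\gan)}$, which the scalar trace theorem bounds by $C\norm{w}_{\H{1}{}(\Omega)}$, itself controlled by the graph norm. With the two norms comparable on $\dom\A$, the embedding $\dom\A\cpt\L{2}{}(\Omega)$ holds in the graph norm, and for any $\lambda$ in the resolvent set $(\lambda-\A)^{-1}$ maps $\L{2}{}(\Omega)$ boundedly into $\dom\A$, hence is compact after composition with this embedding. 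I expect the crux to be exactly this last point: showing that the impedance condition promotes the a priori merely $\H{-1/2}{}(\gan)$ normal trace of $v$ to an $\L{2}{}(\gan)$-trace controlled by the graph norm, which is what simultaneously makes \Cref{cptvecinhomotheo} applicable and delivers the compact resolvents.
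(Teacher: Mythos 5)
Your proposal is correct and follows essentially the same route as the paper: generation via \Cref{le:invariant-subspace} (with $S$ closed and $S\supseteq\ran\A_{0}$), the domain inclusion via $\grad\H{1}{\gat}(\Omega)\subseteq\H{}{\gat,0}(\rot,\Omega)$, and the compact embedding via Rellich for the first factor and \Cref{cptvecinhomotheo} for the second (using that the homogeneous tangential trace is in particular an $\L{2}{}(\gat)$-trace). Your closing paragraph on the graph norm dominating the target norm -- with the impedance relation converting $\norm{\nu_{\gan}v}_{\L{2}{}(\gan)}$ into a trace-theorem bound on $w$ -- is a correct and welcome filling-in of a step the paper leaves implicit.
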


Alternatively, we can also regard the classical formulation of the wave equation 
and see that it is necessary for the second component in our formulation to be 
in $\grad \H{1}{\gat}(\Omega)$, if we want the solutions to correspond.


\bibliographystyle{plain} 
\bibliography{/Users/paule/GoogleDriveData/Tex/input/bibTex/ps}

\begin{thebibliography}{10}

\bibitem{arnoldhu2020a}
D.N. Arnold and K.~Hu.
\newblock Complexes from complexes.
\newblock {\em arXiv}, https://arxiv.org/abs/2005.12437, 2020.

\bibitem{bauerpaulyschomburg2016a}
S.~Bauer, D.~Pauly, and M.~Schomburg.
\newblock The {M}axwell compactness property in bounded weak {L}ipschitz
  domains with mixed boundary conditions.
\newblock {\em SIAM J. Math. Anal.}, 48(4):2912--2943, 2016.

\bibitem{bauerpaulyschomburg2018a}
S.~Bauer, D.~Pauly, and M.~Schomburg.
\newblock {W}eck's selection theorem: The {M}axwell compactness property for
  bounded weak {L}ipschitz domains with mixed boundary conditions in arbitrary
  dimensions.
\newblock {\em arXiv}, https://arxiv.org/abs/1809.01192, 2018.

\bibitem{bauerpaulyschomburg2019a}
S.~Bauer, D.~Pauly, and M.~Schomburg.
\newblock {W}eck's selection theorem: The {M}axwell compactness property for
  bounded weak {L}ipschitz domains with mixed boundary conditions in arbitrary
  dimensions.
\newblock {\em Maxwell's Equations: Analysis and Numerics (Radon Series on
  Computational and Applied Mathematics 24), De Gruyter}, pages 77--104, 2019.

\bibitem{costabel1990a}
M.~Costabel.
\newblock A remark on the regularity of solutions of {M}axwell's equations on
  {L}ipschitz domains.
\newblock {\em Math. Methods Appl. Sci.}, 12(4):365--368, 1990.

\bibitem{engel-nagel}
K.~Engel and R.~Nagel.
\newblock {\em One-parameter semigroups for linear evolution equations}, volume
  194 of {\em Graduate Texts in Mathematics}.
\newblock Springer-Verlag, New York, 2000.
\newblock With contributions by S. Brendle, M. Campiti, T. Hahn, G. Metafune,
  G. Nickel, D. Pallara, C. Perazzoli, A. Rhandi, S. Romanelli and R.
  Schnaubelt.

\bibitem{jochmann1997a}
F.~Jochmann.
\newblock A compactness result for vector fields with divergence and curl in
  ${L}^q({\Omega})$ involving mixed boundary conditions.
\newblock {\em Appl. Anal.}, 66:189--203, 1997.

\bibitem{kurula-zwart-2015}
Mikael Kurula and Hans Zwart.
\newblock Linear wave systems on {$n$}-{D} spatial domains.
\newblock {\em Internat. J. Control}, 88(5):1063--1077, 2015.

\bibitem{leis1986a}
R.~Leis.
\newblock {\em Initial Boundary Value Problems in Mathematical Physics}.
\newblock Teubner, Stuttgart, 1986.

\bibitem{picard2020a}
D.~McGhee, R.~Picard, S.~Trostorff, and M.~Waurick.
\newblock {\em A Primer for a Secret Shortcut to PDEs of Mathematical Physics}.
\newblock Birkh\"auser, 2020.

\bibitem{osterbrinkpauly2019a}
F.~Osterbrink and D.~Pauly.
\newblock Time-harmonic electro-magnetic scattering in exterior weak lipschitz
  domains with mixed boundary conditions.
\newblock {\em Maxwell's Equations: Analysis and Numerics (Radon Series on
  Computational and Applied Mathematics 24), De Gruyter}, pages 341--382, 2019.

\bibitem{osterbrinkpauly2020a}
F.~Osterbrink and D.~Pauly.
\newblock Low frequency asymptotics and electro-magneto-statics for
  time-harmonic {M}axwell's equations in exterior weak {L}ipschitz domains with
  mixed boundary conditions.
\newblock {\em SIAM J. Math. Anal.}, 52(5):4971--5000, 2020.

\bibitem{pauly2017a}
D.~Pauly.
\newblock On the {M}axwell constants in 3{D}.
\newblock {\em Math. Methods Appl. Sci.}, 40(2):435--447, 2017.

\bibitem{pauly2019c}
D.~Pauly.
\newblock A global div-curl-lemma for mixed boundary conditions in weak
  {L}ipschitz domains and a corresponding generalized
  ${A}_{0}^{*}$-${A}_{1}$-lemma in {H}ilbert spaces.
\newblock {\em Analysis (Munich)}, 39(2):33--58, 2019.

\bibitem{pauly2019a}
D.~Pauly.
\newblock On the {M}axwell and {F}riedrichs/{P}oincar\'e constants in {ND}.
\newblock {\em Math. Z.}, 293(3-4):957--987, 2019.

\bibitem{pauly2019b}
D.~Pauly.
\newblock Solution theory, variational formulations, and functional a
  posteriori error estimates for general first order systems with applications
  to electro-magneto-statics and more.
\newblock {\em Numer. Funct. Anal. Optim.}, 41(1):16--112, 2020.

\bibitem{paulyzulehner2020a}
D.~Pauly and W.~Zulehner.
\newblock The div{D}iv-complex and applications to biharmonic equations.
\newblock {\em Appl. Anal.}, 99(9):1579--1630, 2020.

\bibitem{paulyzulehner2020b}
D.~Pauly and W.~Zulehner.
\newblock The elasticity complex: Compact embeddings and regular
  decompositions.
\newblock {\em arXiv}, https://arxiv.org/abs/2001.11007, 2020.

\bibitem{picard1984a}
R.~Picard.
\newblock An elementary proof for a compact imbedding result in generalized
  electromagnetic theory.
\newblock {\em Math. Z.}, 187:151--164, 1984.

\bibitem{picard2009a}
R.~Picard.
\newblock A structural observation for linear material laws in classical
  mathematical physics.
\newblock {\em Math. Methods Appl. Sci.}, 32(14):1768--1803, 2009.

\bibitem{picardweckwitsch2001a}
R.~Picard, N.~Weck, and K.-J. Witsch.
\newblock Time-harmonic {M}axwell equations in the exterior of perfectly
  conducting, irregular obstacles.
\newblock {\em Analysis (Munich)}, 21:231--263, 2001.

\bibitem{skrepek-phs-1}
N.~Skrepek.
\newblock Well-posedness of linear first order port-hamiltonian systems on
  multidimensional spatial domains.
\newblock {\em Evolution Equations \& Control Theory},
  (2163-2480\_2019\_0\_91), 2020.

\bibitem{weber1980a}
C.~Weber.
\newblock A local compactness theorem for {M}axwell's equations.
\newblock {\em Math. Methods Appl. Sci.}, 2:12--25, 1980.

\bibitem{weck1974a}
N.~Weck.
\newblock {M}axwell's boundary value problems on {R}iemannian manifolds with
  nonsmooth boundaries.
\newblock {\em J. Math. Anal. Appl.}, 46:410--437, 1974.

\bibitem{weiss-staffans-maxwell}
G.~Weiss and O.~J. Staffans.
\newblock Maxwell's equations as a scattering passive linear system.
\newblock {\em SIAM J. Control Optim.}, 51(5):3722--3756, 2013.

\bibitem{witsch1993a}
K.-J. Witsch.
\newblock A remark on a compactness result in electromagnetic theory.
\newblock {\em Math. Methods Appl. Sci.}, 16:123--129, 1993.

\end{thebibliography}

\vspace*{5mm}\hrule\vspace*{3mm}

\end{document}